\documentclass[12pt]{amsart}
\usepackage{amsmath}
\usepackage{amsthm}
\usepackage{mathrsfs}
\usepackage{amsfonts} 
\usepackage{graphicx}
\usepackage{hyperref}
\usepackage{epsfig}

\usepackage{amsmath,amssymb,amsfonts,amsthm,amstext,verbatim}
\usepackage{enumerate,color,graphicx,
}
\usepackage{psfrag}  
\usepackage{url}
\usepackage{mdwlist}   
\usepackage{mathtools} 
\usepackage{wasysym}   
\usepackage{mathrsfs}
\usepackage{hyperref}
\usepackage{epsfig}

\addtolength{\textwidth}{1.1cm}
\hoffset=-0.5cm

\numberwithin{equation}{section}

\allowdisplaybreaks

\newtheorem{theorem}{Theorem}[section]

\newtheorem{proposition}[theorem]{Proposition}
\newtheorem{lemma}[theorem]{Lemma}
\newtheorem{remark}[theorem]{Remark}
\newtheorem{definition}[theorem]{Definition}

\newcommand{\ep}{\varepsilon}

\newcommand{\R}{\mathbb{R}}
\newcommand{\N}{\mathbb{N}}
\newcommand{\Z}{\mathbb{Z}}

\newcommand{\dist}{\vert\vert}

\newcommand{\ren}{\mathsf{zR}}

\newcommand{\eps}{\ep}

\newcommand{\sab}{\mathsf{P_{SAB}}}

\newcommand{\saw}{\mathsf{P_{SAW}}}

\newcommand{\sawset}{{\rm SAW}}
\newcommand{\sabset}{{\rm SAB}}

\newcommand{\sfT}{\mathsf{T}}

\newcommand{\unf}{\mathsf{Unf}}

\newcommand{\latt}{\Z^d}
\newcommand{\lattv}{\Z^d}
\newcommand{\latte}{E(\Z^d)}

\newcommand{\sahsw}{\mathsf{P_{SAHSW}}}

\newcommand{\shell}{\mathcal{S}}

\newcommand{\g}{\gamma}

\newcommand{\hang}{\mathsf{hang}}



\newcommand\sS{{\mathcal S}}


\newcommand\si{\sigma}       
     

\newcommand\de{\delta}       
\newcommand\resp{respectively}


\newcommand{\RR}{\mathbb{R}}     
\newcommand{\NN}{\mathbb{N}}     
\newcommand{\ZZ}{\mathbb{Z}}     

\newcommand\la{\lambda}

\theoremstyle{remark}


\newcounter{mycount}



\def\mik{1}
\newcommand\cpsfrag[2]{\ifnum\mik=1\psfrag{#1}{#2}\fi}


\newcommand{\SAW}{\sawset}
\newcommand{\SAP}{\mathrm{SAP}}
\newcommand{\SAB}{\mathrm{SAB}}
\newcommand{\sahswset}{\mathrm{SAHSW}}

\newcommand{\Rfl}{\mathcal R}  

\newcommand{\re}{\mathsf{ren}}

\newcommand{\lex}{hanging}  
\newcommand{\lexm}{\hang} 

\newcommand{\closes}{\text{ closes}}

\newcommand{\lo}{\ell_0}         
\newcommand{\PSAW}{\saw} 
\newcommand{\ESAW}{\mathsf{E_{SAW}}}
\newcommand\Ga{\Gamma}     
\newcommand\ga{\gamma}     
\newcommand\Ptmp{P}  
\newcommand\Wtmp{\rm W}  
\newcommand\Atmp{\rm A}  

\newcommand{\hidden}[1]{}
\renewcommand{\shell}{\varsigma}    



\newcommand\mcond{\, \middle| \,}
\newcommand\cond{\, | \,}

\title[The endpoint of self-avoiding walk]{On the probability that self-avoiding walk ends at a given point}
\date{}

\author[H.~Duminil-Copin, A.~Glazman, A.~Hammond and I.~Manolescu]{Hugo Duminil-Copin, Alexander Glazman, Alan Hammond \\ and Ioan Manolescu}
\address{ Department of Statistics, 
  University of Oxford,
  1 South Parks Road,
  Oxford, OX1 3TG, U.K.}
\address{D\'epartement de Math\'ematiques, Universit\'e de Gen\`eve, 2--4 rue du Li\`evre, Gen\`eve, Switzerland.}
\address{St.\,Petersburg Department of Steklov Mathematical Institute (PDMI RAS).
Fontanka 27, 191023 St. Petersburg, Russia} 

\email{duminil.copin@unige.ch, alexander.glazman@unige.ch, \newline hammond@stats.ox.ac.uk,  ioan.manolescu@unige.ch}
\keywords{}
\thanks{2010 Mathematics Subject Classification. Primary:  60K35.  Secondary: 60D05}

\begin{document}
\begin{abstract}
  We prove two results on the delocalization of the endpoint of a uniform self-avoiding walk on $\ZZ^d$ for $d \geq 2$. 
  We show that the probability that a walk of length $n$ ends at a point $x$ 
  tends to $0$ as $n$ tends to infinity, uniformly in $x$. 
  Also, when $x$ is fixed, with $\dist x \dist = 1$, 
  this probability decreases faster than $n^{-1/4 + \eps}$ for any $\eps >0$.
  This provides a bound on the probability that a self-avoiding walk is a polygon.
\end{abstract}
\maketitle

\section{Introduction}

Flory and Orr \cite{Flory,Orr47}  introduced self-avoiding walk as a model of a long chain of molecules. 
Despite the simplicity of its definition, the model has proved resilient to rigorous analysis. 
While in dimensions $d \geq 5$ 
lace expansion techniques provide a detailed understanding of the model,
and the case $d = 4$ is the subject of extensive ongoing research, 
very little is known for dimensions two and three. 

The present paper uses combinatorial techniques to prove two intuitive results for dimensions $d \geq 2$. 
We feel that the interest of the paper lies not only in its results, but also in techniques employed in the proofs. 
To this end, certain tools are emphasised as they may be helpful in future works as well. 

We mention two results from the early 1960s that stand at the base of our arguments: Kesten's \emph{pattern theorem}, which concerns the local geometry of a typical self-avoiding walk, 
and Hammersley and Welsh's {\em unfolding argument}, which gives a bound on the correction to the exponential growth rate in the number of such walks.

\subsection{The model} 

Let $d \geq 2$. For $u \in \R^d$, let $\dist u \dist$ denote the Euclidean norm of $u$. 
Let $\latte$ denote the set of nearest-neighbour bonds of the integer lattice $\latt$.
A {\em walk} of length $n \in \N$ is a map $\gamma:\{0,\ldots,n \} \to \lattv$ 
such that $(\gamma_i,\gamma_{i+1}) \in \latte$ for each $i \in \{0,\ldots,n-1\}$.
An injective walk is called {\em self-avoiding}. 
Let $\sawset_n$ denote the set of self-avoiding walks of length~$n$ that start at $0$. 
We denote by $\saw_n$ the uniform law on $\sawset_n$, and by $\ESAW_n$ the associated expectation. 
The walk under the law $\PSAW_n$ will be denoted by $\Ga$. 

\subsection{The endpoint displacement of self-avoiding walk}

The law of the endpoint displacement under $\PSAW_n$ 
is a natural object of study in an inquiry into the global geometry of self-avoiding walk.
The displacement is quantified by the Flory exponent $\nu$, specified by the conjectural relation 
$\ESAW_n [\dist \Gamma_n \dist^2] = n^{2\nu+o(1)}$. 

In dimension $d \geq 5$, it is rigorously known that $\nu = 1/2$ (see Hara and Slade \cite{HS91,HS92}). 
When $d=4$, $\nu = 1/2$ is also anticipated, though this case is more subtle from a rigorous standpoint. Recently,
some impressive results have been achieved using a supersymmetric renormalization group approach for continuous-time weakly self-avoiding walk: 
see \cite{BDGS11,BIS09,BS10} and references therein. 

When $d=2$, $\nu = 3/4$ was predicted nonrigorously in \cite{Nie82,Nie84} using the  Coulomb gas formalism, 
and then in \cite{Dup89,Dup90} using Conformal Field Theory. 
It is also known subject to the assumption of existence of the scaling limit and its conformal invariance \cite{LSW}.   
Unconditional rigorous statements concerning the global geometry of the model are almost absent in the low dimensional cases at present. 
In \cite{DCH12}, sub-ballistic behaviour of self-avoiding walk in all dimensions $d \geq 2$ was proved, 
in a step towards the assertion that $\nu < 1$. 

\subsection{Results}

This paper is concerned in part with ruling out another extreme behaviour for endpoint displacement, 
namely that $\Gamma_n$ is close to the origin. 
In \cite{Madras12}, the mean-square displacement of the walk is proved to exceed $n^{4/(3d)}$. 
As we will shortly explain, a variation of that argument shows that
$\PSAW_n(\dist \Gamma_n \dist = 1) \leq \tfrac{2}{3}$ for all $n$ high enough. 
Recently, Itai Benjamini posed the question of strengthening this conclusion to 
$\PSAW_n(\dist \Gamma_n \dist = 1) \to 0$ as $n \to \infty$.
In this article, we confirm this assertion 
and also investigate the related question of the uniform delocalization of $\Ga_n$. 

The first theorem deals with a quantitative bound on $\PSAW_n(\dist \Gamma_n \dist = 1)$.
\begin{theorem}\label{t.nonclosing}
  Let $d \geq 2$. For any $\eps > 0$ and $n$ large enough,
  $$\saw_n \big(  \dist \Gamma_n \dist = 1 \big) \leq n^{-1/4 + \eps}.$$
\end{theorem}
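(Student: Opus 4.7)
My plan is to reduce the statement to a polygon-counting estimate, and to prove the latter by a multi-site surgery argument combining Kesten's pattern theorem with Hammersley-Welsh unfolding. Writing $q_n := |\{\gamma \in \sawset_n : \dist\gamma_n\dist = 1\}|$, I observe that each walk contributing to $q_n$ closes, via the edge $\{0,\gamma_n\}$, into a self-avoiding polygon of length $n+1$ through the origin; conversely, each such polygon gives rise to exactly two walks (one per orientation). Translation invariance then yields $q_n = 2(n+1) p_{n+1}$, where $p_m$ denotes the number of SAPs of length $m$ modulo translation. Hence the theorem is equivalent to the polygon estimate
\[
 p_{n+1}/c_n \;\leq\; C \, n^{-5/4+\eps},
\]
where $c_n = |\sawset_n|$.

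The heart of the argument is a surgery that turns a SAP into many SAWs of comparable length. By Kesten's pattern theorem, all but exponentially few $\gamma \in \sawset_n$ contain at least $\epsilon n$ disjoint copies of a fixed proper pattern $P$ (for instance three colinear steps). A preliminary task is to transfer this statement to SAPs: since SAPs and SAWs share the exponential growth rate $\mu$, one expects the same lower bound on pattern occurrences to hold for all but an exponentially small fraction of SAPs, though the transfer requires a careful Hammersley-Welsh style argument. Given a pattern-rich polygon and a designated copy of $P$ inside it, one performs a local modification replacing $P$ by a slightly different short piece whose endpoints are no longer aligned, turning the polygon into an \emph{open} SAW with a prescribed non-zero displacement between its endpoints. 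A single surgery yields an $\Omega(n)$-to-one map from polygons to walks, giving only $p_{n+1}/c_n \lesssim 1/n$, which is too weak. To reach $n^{-5/4+\eps}$, the plan is to perform several surgeries in parallel at well-separated pattern sites; a preliminary Hammersley-Welsh unfolding of the polygon into a standard ``bridge-like'' form with large extent in one coordinate direction creates the geometric room required for the surgery sites to act independently. The Hammersley-Welsh overhead is $\exp(O(\sqrt n))$, which is subpolynomial in $n$ and absorbed into the $\eps$ slack.

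The principal obstacle is the simultaneous design of the unfolding and the multi-site surgery. The composite map must (a) send polygons of length $n+1$ to SAWs of length $n + O(1)$, so that the codomain is bounded by $O(c_n)$; (b) be invertible up to bounded multiplicity, so that the input polygon and the surgery sites can be recovered from the output walk by locating its anomalous local configurations; and (c) realise a genuine $n^{5/4-\eps}$-fold amplification rather than a naive $n$-fold or an overly optimistic $n^{2}$-fold gain, which requires verifying that the independently-placed surgeries produce pairwise distinct outputs. The adaptation of Kesten's pattern theorem to SAPs, together with an exponential bound handling the exceptional pattern-poor polygons, forms the secondary technical layer. I expect the argument to be a fairly intricate orchestration of these ingredients rather than a short combinatorial trick.
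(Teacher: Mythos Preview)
Your proposal has a genuine gap at its core, and the paper's proof proceeds along entirely different lines.

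The obstruction to the multi-site surgery is the preimage count. Suppose your local modification replaces a pattern $P$ by an ``anomalous'' pattern $Q$ whose relative endpoints differ from those of $P$. After performing $k$ such surgeries on a polygon, you obtain a walk whose endpoint displacement determines $k$, but not \emph{which} $k$ sites were operated on. To invert, you must locate the surgery sites inside the output walk by finding occurrences of $Q$. But Kesten's pattern theorem cuts both ways: in a typical length-$n$ walk, \emph{every} admissible pattern, including $Q$, occurs $\Theta(n)$ times. Hence a generic output walk has order $n$ candidate $Q$-sites, and undoing the surgery at any $k$ of them yields a legitimate preimage polygon. The contraction factor of your multi-valued map is therefore $\binom{\Theta(n)}{k}/\binom{\Theta(n)}{k}$, bounded independently of $n$; the apparent $\binom{\epsilon n}{k}$ amplification on the domain side is cancelled. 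No choice of $k$ rescues a polynomial gain. (There is a second, more technical, issue: if $Q$ and $P$ have different relative endpoints, the replacement shifts the remainder of the walk globally and may create self-intersections; this is why pattern arguments in the literature use patterns with identical endpoints.) Your unfolding preprocessing does not address either problem. The paper in fact remarks explicitly that the best available separate bounds on $p_n$ and $c_n$ do not even yield $\PSAW_n(\Gamma\text{ closes})\to 0$, and your map is, in effect, an attempt to improve the polygon count against the walk count.

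The paper's argument is not a counting surgery at all but a proof by contradiction on the \emph{conditional} closing probability. Assuming $\PSAW_n(\Gamma\text{ closes})\geq n^{-1/4+5\eps}$, one uses polygonal invariance to find $\ell_0\in[n/4,3n/4]$ with $\PSAW_n(\Gamma\text{ closes}\mid\lexm=\ell_0)\geq n^{-1/4+4\eps}$, and calls a first part $\Gamma^1$ \emph{good} if its conditional closing probability exceeds $n^{-1/4+3\eps}$. Pattern resampling (swapping type I/II patterns between $\Gamma^1$ and $\Gamma^2$, which preserves the shell and hence the closing property) transfers this to all hanging times $\ell_0-2k$, $k\leq\sqrt n$. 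The decisive step is a cascade: if a closing walk has $\sfT\geq n^{1/4+\eps}$ nested good sub-walks $\chi^1\subset\cdots\subset\chi^{\sfT}$ ending at the hanging point, then for a uniform continuation $\Gamma$ below that point the events $\{\Gamma\text{ closes }\chi^i\}$ are nearly disjoint while $\PR(\Gamma\text{ closes }\chi^i\mid\Gamma\text{ avoids }\chi^i)\geq n^{-1/4+3\eps}$; iterating gives $\PR(\Gamma\text{ avoids }\chi^{\sfT})\leq\exp(-n^{\eps})$. An unfolding then converts this into the contradiction that a polynomially likely event has stretched-exponentially small probability. The exponent $1/4$ arises from balancing the $\sqrt n$ range of $k$ against the $n^{-1/4}$ threshold in the cascade, not from any surgery multiplicity.
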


Observe that $\saw_n (  \dist \Gamma_n \dist = 1)$ equals $n p_n/c_n$, where $c_n$ and $p_n$ 
denote the number of self-avoiding walks and self-avoiding polygons 
(such polygons will be formally introduced in Section~\ref{sec:quantitative}, and are considered up to translation) of length $n$. It is thus natural to think that Theorem~\ref{t.nonclosing} may be proved by providing bounds on $c_n$ and $p_n$ separately.
On $\Z^2$, the best such rigorous bounds are  $c_n\geq \mu_c^n$ (this follows from submultiplicativity) and $p_n \le Cn^{-1/2} \mu_c^n$ \cite{Madras95}, where $\mu_c := \lim_n \vert c_n \vert^{1/n}$ is the connective constant. These two bounds do not imply that $\saw_{n} ( \dist \Gamma \dist = 1 )$ tends to 0. A similar issue arises in dimensions 3 and 4. We will therefore try to bound the ratio $np_n/c_n$ directly.

For dimensions $d \geq 5$, it has been proved in \cite[Thm. 1.1]{HS92} that \mbox{$c_n \sim A \mu_c^n$} for some constant $A > 0$.
It is expected (and proved for high enough dimension \cite[Thm. 6.1.3]{MS93}) that $p_n \le C n^{-d/2 -1} \mu_c^n$ for some constant $C$.
This proves that
\begin{align*}
	\saw_n (\dist \Gamma_n \dist = 1) \leq C n^{-d/2},
\end{align*} 
an inequality which is consistent with the Gaussian behaviour of $\Gamma_n$ above dimension $4$. 
See \cite{CS09} for an extended discussion of self-avoiding polygons in high dimensions.  

For dimension $d=2$, 
$\saw_{n} ( \dist \Gamma_n \dist = 1 )$ is conjectured to behave like $n^{-59/32 + o(1)}$ as $n \to \infty$ through odd values. This follows from the well-known predictions $p_n = n^{-5/2 + o(1)} \mu_c^n$ for odd $n$
and $c_n = n^{11/32 + o(1)} \mu_c^n$.
The first of these two estimates may be derived from a conjectural hyperscaling relation \cite[1.4.14]{MS93} 
linking its value to that of the Flory exponent $\nu$; 
the second was first predicted by Nienhuis in \cite{Nie82} and can also be deduced from SLE$_{8/3}$, see \cite[Prediction~5]{LSW}.
\medbreak
We expect that Theorem \ref{t.nonclosing} may be extended as follows: for any {\em fixed} point $x$, 
\begin{align}\label{eq:SAW_to_x}
	\saw_n (\Gamma_n = x) \leq n^{-1/4 + \eps}
\end{align} 
for any $n$ large enough.
Indeed, as conjectured in \cite[Conj. 1.4.1]{MS93}, 
local surgery arguments are expected to show that, for $x \in \Z^d$ with an odd sum of coordinates, 
$\saw_n (\Gamma_n = x)$ is comparable to $\saw_n (\dist \Gamma_n\dist = 1)$. (The mention of parity is necessary because 
a walk of even length may not end one step from the origin.)
For $\Z^2$ this may potentially be shown by a tedious ad-hoc construction. For higher dimension,
significant difficulties occur and it is unclear how to prove this result. 
\medbreak
We will not extend Theorem~\ref{t.nonclosing} to arbitrary $x$, but we prove in the theorem below that $\Ga_n$ is not concentrated by providing uniform (yet non quantitative) bounds on the probability to end at a given point.

\begin{theorem}\label{theorem:SAW_to_x}
  Let $d \geq 2$. As $n \to \infty$,
  $\sup_{x \in \Z^d} \saw_n (\Gamma_n = x) \to 0$. 
\end{theorem}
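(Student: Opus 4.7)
The natural strategy is proof by contradiction combined with a pattern-substitution argument. Suppose toward contradiction that there exist $\eps > 0$, a subsequence $n_k \to \infty$, and points $x_k \in \Z^d$ with $\saw_{n_k}(\Gamma_{n_k} = x_k) \geq \eps$ for every $k$. The plan is to show that, from the walks of length $n_k$ ending at $x_k$, one may produce $m+1$ disjoint families of walks of length $n_k$ ending at $m+1$ distinct points, each family of cardinality at least $c_0 \eps\, c_{n_k}$ for a constant $c_0>0$ independent of $m$. Comparing with $\sum_y p_{n_k}(y) = c_{n_k}$ forces $m+1 \leq 1/(c_0\eps)$, a contradiction once $m$ is chosen sufficiently large.

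First, I would fix a proper pattern $P$ (in Kesten's sense) of some length $\ell$, together with a family of length-$\ell$ alternatives $P^{(0)} = P, P^{(1)}, \ldots, P^{(m)}$ whose net displacements $u_0,\dots,u_m\in\Z^d$ are pairwise distinct. In any dimension $d \geq 2$ such a family of arbitrary cardinality is easy to produce by attaching small zig-zag modifications to a long straight segment. Kesten's pattern theorem then ensures that, for any $\delta > 0$, the fraction of walks in $\sawset_n$ containing fewer than $\delta n$ disjoint occurrences of $P$ decays exponentially in $n$. Combined with the hypothesis, for $n=n_k$ large enough,
\[
\big|\{\gamma \in \sawset_n : \gamma_n = x_k,\ \gamma \text{ contains } P\}\big| \;\geq\; \tfrac{\eps}{2}\, c_n.
\]

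Next, for each $i$, define a map $\Phi_i$ acting on such a walk $\gamma$ by replacing a designated occurrence of $P$ with $P^{(i)}$; the length is preserved, and both the endpoint and every vertex past the substituted block are translated by $u_i - u_0$. Since the $x_k + u_i - u_0$ are distinct, the images of the $\Phi_i$ automatically lie in disjoint subsets of $\sawset_n$. The delicate point is that translating the tail of the walk can create collisions with the head, so $\Phi_i(\gamma)$ need not be self-avoiding. I would address this by first invoking the Hammersley--Welsh unfolding argument to reduce the problem to walks in bridge form, in which one can locate an occurrence of $P$ in a region geometrically separated from the rest of the walk so that the substitution preserves self-avoidance automatically. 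Unfolding and re-folding cost at most a factor $e^{O(\sqrt n)}$, which is absorbed into a mild weakening of $\eps$ since we compare only against the exponential $c_n$.

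Putting the pieces together yields $p_n(x_k + u_i - u_0) \geq c_0 \eps\, c_n$ for each $i\in\{0,\ldots,m\}$, hence
\[
c_n \;\geq\; \sum_{i=0}^{m} p_n(x_k + u_i - u_0) \;\geq\; (m+1)\, c_0 \eps\, c_n,
\]
and taking $m+1 > 1/(c_0\eps)$ is the contradiction.

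The main obstacle is the self-avoidance of $\Phi_i(\gamma)$: interior pattern substitution shifts the entire tail, and in low dimension the shifted tail may intersect the head of the walk. Reconciling a local combinatorial modification with a global self-avoidance constraint is the recurring difficulty in SAW surgery, and I expect this is where the paper's most substantive work lies — presumably through a careful interleaving of the pattern theorem with the unfolding argument, perhaps augmented by the additional surgery tools that the introduction flags as being of independent interest.
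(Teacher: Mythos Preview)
There is a genuine gap, and it sits exactly where you flagged it. Your counting step needs $p_n(x_k + u_i - u_0) \geq c_0\,\eps\, c_n$ with $c_0$ a \emph{constant independent of $n$}, because the contradiction is $(m+1)c_0\eps \leq 1$ for a fixed integer $m$. An $e^{O(\sqrt n)}$ loss from unfold--refold therefore cannot be ``absorbed into a mild weakening of $\eps$'': it turns the inequality into $(m+1)c_0\eps \leq e^{C\sqrt n}$, which says nothing. The phrase ``we compare only against the exponential $c_n$'' is a red herring --- the $c_n$'s cancel and you are comparing constants. Even ignoring the quantitative loss, Hammersley--Welsh unfolding moves the endpoint (that is its purpose), so the constraint $\gamma_n = x_k$ is destroyed, and after refolding there is no mechanism forcing the endpoint to land at $x_k + u_i - u_0$. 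Tracking the endpoint through an unfold--surgery--refold cycle is essentially the original problem.

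The paper does not try to rescue a displacement-changing pattern substitution on general walks. It uses a different architecture: first prove endpoint delocalization for \emph{bridges}, by showing (Proposition~\ref{prop:M}, via the shell machinery of Section~\ref{sec:pattern}) that typical bridges carry many $z$-renewal times, and then performing the displacement-changing surgery \emph{at a $z$-renewal level} (Proposition~\ref{prop:Mvariation}), where head and tail lie on opposite sides of a hyperplane and a horizontal edge-swap is automatically self-avoiding. The bridge result is then lifted to half-space walks and finally to general walks by multi-valued maps built from the unfolding $\unf$ (Proposition~\ref{prop:half-space}, Lemmas~\ref{lem:many R} and~\ref{lem:few R}), splitting according to whether the portion up to the hanging point has many or few $z$-renewals. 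So the key idea you are missing is that the surgery is anchored at renewal points rather than at pattern occurrences; the pattern theorem enters only indirectly, to guarantee enough renewals and enough height.
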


%
%
%
%

The rest of the paper is structured as follows. 
In Section \ref{sec:pre}, we lay out some of the tools used in the proofs, 
namely the multi-valued map principle, unfolding arguments and the Hammersley-Welsh bound. 
The proofs of Theorems \ref{t.nonclosing} and \ref{theorem:SAW_to_x} 
may be found in Sections \ref{sec:quantitative} and \ref{sec:uniform}, respectively.
In spite of the similarity of the results, the two proofs employ very different techniques, 
and indeed Sections \ref{sec:uniform} and \ref{sec:quantitative} may be read independently of each other. 

In Section \ref{sec:pattern} we define and discuss notions revolving around Kesten's pattern theorem.
Although the material in this section is not original {\em per se}, our presentation is novel and, we hope, may be fruitful for further research. 
An example of a consequence is the following delocalization of the midpoint of self-avoiding walk.
\begin{proposition}\label{prop:midpoint}
  There exists a constant $C > 0$ such that, for $n \in \NN$,
  \begin{align}
    \sup_{x \in \ZZ^d} \PSAW_n\left(\Ga_{\lfloor \frac{n}{2} \rfloor} = x \right) \leq C n^{- \frac12}.
    \nonumber
  \end{align}
\end{proposition}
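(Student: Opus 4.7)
My plan is to prove Proposition \ref{prop:midpoint} by a pattern-theoretic shift argument, exploiting the multi-valued map principle discussed in Section \ref{sec:pre}. Set $A_x := \{\gamma \in \sawset_n : \gamma_{\lfloor n/2 \rfloor} = x\}$; the aim is the uniform bound $|A_x| \leq C n^{-1/2} c_n$. The strategy is to produce, starting from each walk $\gamma \in A_x$, on the order of $\sqrt{n}$ distinct self-avoiding walks in $\sawset_n$ whose midpoints are all pairwise distinct; the multi-valued map principle then gives $\sqrt{n} \cdot |A_x| \leq C' c_n$, whence the result.

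For the construction I would choose a pair of proper internal patterns $(P, P')$ of equal length, sharing the same incoming and outgoing edges but having distinct endpoint displacements $v \neq v'$; set $w := v' - v \neq 0$. Such pairs exist in $\ZZ^d$ for every $d \geq 2$ and can be built from short pieces of SAW. By Kesten's pattern theorem, as developed in Section \ref{sec:pattern}, every walk in $\sawset_n$ outside an exponentially small fraction contains at least $c n$ disjoint occurrences of $P$ inside its first half $\gamma_{[0, \lfloor n/2 \rfloor]}$. For such a typical $\gamma \in A_x$, replacing $j$ of these occurrences by $P'$ translates every vertex lying after the replaced patterns by the accumulated displacement $jw$; in particular the midpoint shifts from $x$ to $x + jw$, producing (when the resulting walk is valid) an element of $A_{x + jw}$.

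The main technical obstacle is to verify two properties for $j = 1, \ldots, \lfloor c_0 \sqrt{n} \rfloor$. First, that a positive fraction of walks in $A_x$ admit $j$ simultaneous substitutions while remaining self-avoiding: this should follow from a buffer argument using the spatial separation between occurrences of $P$ supplied by the pattern-theorem framework of Section \ref{sec:pattern}, the natural cap $j = O(\sqrt{n})$ reflecting the fact that the accumulated shift must not bring translated segments of the walk into collision with untranslated ones. Second, that the resulting multi-valued map has inverse multiplicity of order $1$: given $\gamma' \in A_{x+jw}$ together with the value of $j$, one should be able to identify the $j$ substituted occurrences (for instance as the $j$ earliest $P'$-occurrences lying before the midpoint) and invert them to recover $\gamma$. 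Granting these two inputs, the multi-valued map principle applied to the essentially injective map $A_x \to \bigsqcup_{j=0}^{\lfloor c_0 \sqrt{n} \rfloor} A_{x + jw}$, combined with the trivial bound $\sum_j |A_{x+jw}| \leq c_n$, yields Proposition \ref{prop:midpoint}.
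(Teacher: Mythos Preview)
Your proposal has a genuine gap, and it is precisely the first ``technical obstacle'' you flag: there is no reason to believe that replacing $j$ occurrences of $P$ by $P'$ yields a self-avoiding walk for a positive fraction of $\gamma\in A_x$, even when $j=1$. When $P$ and $P'$ have the same length but different displacement $w=v'-v\neq 0$, a single substitution at index $k$ rigidly translates the entire segment $\gamma[k+|P|,n]$ by $w$; this translated segment may well intersect $\gamma[0,k]$. The buffer structure around a pattern only protects a bounded neighbourhood of the slot; it says nothing about global collisions between the shifted second half and the unshifted first half. Your heuristic that a shift of order $\sqrt{n}$ is ``safe'' presupposes that distinct macroscopic pieces of a typical $n$-step walk are spatially separated on scale $\sqrt{n}$, which is exactly the kind of delocalization statement one is trying to prove and is not available as input in dimensions $2$ and $3$. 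The inversion step is also shakier than you suggest (the original $\gamma$ may already contain $P'$-occurrences, so ``the $j$ earliest $P'$-occurrences'' need not be the substituted ones), but this could perhaps be repaired with a many-to-many version of the multi-valued map; the self-avoidance problem cannot.

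The paper avoids this difficulty by a change of viewpoint: its type~I and type~II patterns have the \emph{same} start and end points (indeed, identical trace on the boundary of their containing cube) but \emph{different lengths}, $|\chi^{II}|=|\chi^{I}|+2$. Swapping one for the other therefore leaves the walk geometrically unchanged outside the slot and, in particular, self-avoiding; what changes is the \emph{index} at which the walk passes through any given point. Conditioning on the shell $\varsigma(\Gamma)$, the event $\{\Gamma_{\lfloor n/2\rfloor}=x\}$ becomes the event that the number $T_I^1$ of type~I patterns in the slots preceding $x$ equals one specific value. Since the allocation of type~I patterns to slots is uniform given the shell and length, $T_I^1$ is hypergeometric; Lemma~\ref{lem:balanced} (specifically \eqref{eq.zdistrib}) shows that each value is taken with probability $O(n^{-1/2})$. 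Summing over shells, with the pattern theorem handling the exceptional set where $|S_1|$, $|S_2|$, $T_I$ or $T_{II}$ is small, gives the result. In short: rather than moving the walk in space to change the midpoint location, one moves the midpoint index in time while keeping the walk fixed.
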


\noindent \textbf{A comment on notation.}
The only paths which we consider in this paper are self-avoiding walks. 
Thus, we may, and usually will, omit the term ``self-avoiding" in referring to them. 
In the course of the paper, several special types of walk will be considered -- such as self-avoiding bridges 
and self-avoiding half-space walks -- and this convention applies to these objects as well.
 \medbreak
\noindent \textbf{Acknowledgments.}
We thank Itai Benjamini for suggesting the problem to us, and a referee for a very thorough reading of the manuscript. The first, second and fourth authors were supported by the ERC grant AG CONFRA as well as the FNS. The third author was supported by EPSRC grant
EP/I004378/1 and thanks the Mathematics Department of the University of
Geneva for its hospitality during the year 2012--13. 
 
\section{Preliminaries}\label{sec:pre}

\subsection{General notation} 

We denote by $\langle\cdot|\cdot\rangle$ the scalar product on $\RR^d$,
and recall the notation $\dist \cdot \dist$ for the Euclidean norm on this space. 
Let $e_1, \dots, e_d$ be the natural basis of $\ZZ^d$.
We will consider $e_1$ to be the \emph{vertical} direction. 
The cardinality of a set $E$ will be denoted by $|E|$. 
The length of a walk $\gamma$, being the cardinality of its edge-set, will be denoted by $|\gamma|$. 
For $0 \leq i \leq j \leq n$, we write $\ga[i, j] = (\ga_i, \dots, \ga_j).$

For $m, n\in \mathbb{N}$, let $\gamma$ and~$\tilde{\gamma}$ be two walks of lengths~$m$ and~$n$, \resp, neither of which needs to start at~$0$. 
The concatenation~$\gamma\circ\tilde{\gamma}$ of~$\gamma$ and~$\tilde{\gamma}$ is given by
$$
\left(\gamma\circ\tilde{\gamma}\right)_k=
\left\{ 
  \begin{array}{l l}
    \gamma_k  \quad & k\leq m,\\
    \gamma_m+(\tilde{\gamma}_{k-m}-\tilde{\gamma}_0) & m+1\leq k \leq m+n.
  \end{array} \right.
$$

\subsection{The multi-valued map principle}

Multi-valued maps and the multi-valued map principle stated next will play a central role in our analysis.

A multivalued map from a set $A$ to a set $B$ is a map  $\Phi:A\rightarrow \mathfrak P(B)$. 
For $b \in B$, let $\Phi^{-1}(b) = \big\{ a \in A: b\in \Phi(a) \big\}$
and 
\begin{equation}
  \Lambda_\Phi(b)=\sum_{a\in\Phi^{-1}(b)}\frac{1}{|\Phi(a)|}.
\end{equation}
The quantity $\Lambda_\Phi(b)$ may be viewed as a (local) contracting factor of the map, 
as illustrated by the following statement. 

\begin{lemma}[Multi-valued map principle]\label{prop:multivalmap}
  Let $A$ and $B$ be two finite sets and $\Phi:A\rightarrow \mathfrak P(B)$  be a multi-valued map. 
  Then
  $$
  |A|=\sum_{b\in B}\Lambda_\Phi(b)\le |B|\max_{b\in B}\Lambda_\Phi(b) \, .
  $$
\end{lemma}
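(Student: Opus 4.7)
The plan is to prove a uniform upper bound of the form $|A_x| \leq C c_n / \sqrt n$ for the set $A_x := \{\gamma \in \sawset_n : \gamma_{\lfloor n/2 \rfloor} = x\}$, where $c_n = |\sawset_n|$; dividing by $c_n$ then yields the proposition. The strategy combines Kesten's pattern theorem (the main subject of Section~\ref{sec:pattern}) with the multi-valued map principle of Lemma~\ref{prop:multivalmap}. Heuristically: a typical walk $\gamma \in A_x$ should admit $\Omega(\sqrt n)$ distinct local surgeries in its first half that produce shorter SAWs, while each target SAW arises from only $O(1)$ pre-images, so $A_x$ must occupy at most an $O(1/\sqrt n)$ fraction of the ambient set $\sawset_n$.

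More concretely, I would fix a short proper pattern $P$ in the sense of Kesten, together with a replacement $P'$ of length $|P| - 2$ with the same endpoint displacement, such that the local substitution $P \to P'$ is a well-defined, length-reducing, self-avoidance-preserving operation on any SAW containing $P$. Kesten's pattern theorem then guarantees constants $\delta, c > 0$ such that all but at most $e^{-cn} c_n$ walks $\gamma \in \sawset_n$ contain at least $\delta n$ disjoint occurrences of $P$ in the first half $\gamma[0, \lfloor n/2 \rfloor]$; the exceptional set contributes negligibly to $|A_x|$. For a \emph{good} $\gamma \in A_x$, enumerating the first $\lfloor \sqrt n \rfloor$ such occurrences $j_1 < \cdots < j_{\lfloor \sqrt n \rfloor}$ and substituting $P \to P'$ at exactly one of these positions in turn produces a family $\Phi(\gamma) \subset \sawset_{n-2}$ of cardinality $\lfloor \sqrt n \rfloor$.

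The multi-valued map principle then gives
\[
|A_x^{\mathrm{good}}| \leq \frac{1}{\lfloor \sqrt n \rfloor} \sum_{\beta \in \sawset_{n-2}} |\Phi^{-1}(\beta)| \leq \frac{C}{\sqrt n} c_{n-2} \leq \frac{C'}{\sqrt n} c_n,
\]
provided we can bound $|\Phi^{-1}(\beta)| = O(1)$ uniformly in $\beta$; combined with the exponentially small exceptional contribution, this closes the bound on $|A_x|$.

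The principal obstacle is precisely the pre-image control $|\Phi^{-1}(\beta)| = O(1)$. A naive choice, such as $P$ a ``U-bump'' and $P'$ a straight segment, collapses: straight segments are abundant in generic SAWs, so a single target $\beta$ may admit $\Theta(n)$ reverse-substitution sites, and the method delivers only the trivial bound. Overcoming this requires designing $(P, P')$ with a distinctive local signature (for instance, a pattern whose substitution leaves an unmistakable turn structure that cannot occur spontaneously), or a refinement of the multi-valued map that records the substitution position into an auxiliary label decodable from the target alone. This is exactly the kind of pattern-theoretic refinement that the novel presentation of Kesten's theorem in Section~\ref{sec:pattern} is set up to provide.
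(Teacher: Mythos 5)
Your proposal does not address the statement you were asked to prove. The statement is Lemma~\ref{prop:multivalmap} itself, the multi-valued map principle: for finite sets $A$, $B$ and $\Phi:A\to\mathfrak P(B)$, one has $|A|=\sum_{b\in B}\Lambda_\Phi(b)\le |B|\max_{b\in B}\Lambda_\Phi(b)$. This is a one-line double-counting identity: since $\Lambda_\Phi(b)=\sum_{a\in\Phi^{-1}(b)}\tfrac{1}{|\Phi(a)|}$, summing over $b\in B$ and exchanging the order of summation gives
\begin{equation*}
\sum_{b\in B}\Lambda_\Phi(b)=\sum_{a\in A}\sum_{b\in\Phi(a)}\frac{1}{|\Phi(a)|}=\sum_{a\in A}1=|A|,
\end{equation*}
(with the tacit convention that $\Phi(a)\neq\varnothing$ for all $a$), and the inequality is immediate since a sum of $|B|$ nonnegative terms is at most $|B|$ times the largest. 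This is exactly why the paper dismisses the proof as immediate. What you have written instead is an attempted proof of Proposition~\ref{prop:midpoint} (midpoint delocalization), and it \emph{invokes} Lemma~\ref{prop:multivalmap} as a tool --- so as an answer to the actual question it is circular: you assume the very statement you were asked to establish.

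Even judged as a proof of Proposition~\ref{prop:midpoint}, your argument is incomplete by your own admission: the entire weight rests on the uniform pre-image bound $|\Phi^{-1}(\beta)|=O(1)$, which you do not establish, and which indeed fails for naive pattern pairs. The paper's route is different and avoids this issue: it conditions on the shell $\varsigma(\Gamma)$, notes that conditionally on a typical shell the type~I patterns are allocated uniformly at random among the slots (a hypergeometric law, see \eqref{eq:Tdistrib} and \eqref{eq.zdistrib}), and observes that the midpoint being $x$ forces the number of type~I patterns before $x$ to take a single prescribed value, an event of probability $O(n^{-1/2})$ by the local limit estimate \eqref{eq.Tuncertainty}; the atypical shells are handled by the pattern theorem and the Hammersley--Welsh bound. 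If you want to pursue your surgery-based approach, the position of the modified slot must be made recoverable from the image walk (for example by encoding it via the count of type~I patterns preceding it), which is essentially what the shell formalism accomplishes.
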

The proof is immediate. 
We will often apply the lemma in the special situation where, for any $b \in B$, $|\Phi(a)|$ is independent of $a \in \Phi^{-1}(b)$.
Then the contracting factor may be written $\Lambda_{\Phi}(b)=\frac{|\Phi^{-1}(b)|}{|\Phi(a)|}$ for any $a \in \Phi^{-1}(b)$.

\subsection{Unfolding self-avoiding walks}\label{sec:unfold}

An unfolding operation, similar to the one used in \cite{hammersleywelsh} and more recently in \cite{Madras12}, 
will be applied on several occasions. 
We first describe the operation in its simplest form, and then the specific version that we will use. 

For $z \in \ZZ^d$, 
let $\Rfl_z$ be the orthogonal reflection with respect to the plane 
$\{x\in \ZZ^d: \langle x | e_1 \rangle  =  \langle z | e_1 \rangle \}$, i.e. the map such that for any $x\in \ZZ^d$,
$$ \Rfl_z (x) = x +  2\langle z  -   x | e_1 \rangle e_1.$$

For $\gamma \in \SAW_n$,  let $k$ be any index such that
$$ 
\langle\gamma_k | e_1\rangle = \max \{\langle\gamma_j | e_1\rangle : 0 \le j \le n\} \, .
$$
The simplest unfoldings of $\gamma$ are
those walks obtained by concatenating $\ga[0,k]$ and $\Rfl_{\gamma_k}(\ga[k,n])$ for such an index $k$.
The condition on $k$ ensures that any such walk is indeed self-avoiding.
 
Of the numerous choices of the unfolding point index $k$, 
the following seems to be the most suitable for our purpose. 
\begin{definition}\label{d.hang}
  For $\ga \in \SAW_n$,  the \emph{hanging time}
  $\lexm = \lexm(\ga)$ is the index $k \in \{0, \dots, n\}$ 
  for which $\ga_k$ is maximal for the lexicographical order of $\ZZ^d$. 
  We call $\ga_\lexm$ the \emph{\lex\ point} and write $\gamma^1=\ga[0,\lexm]$ and $\gamma^2=\ga[\lexm,n]$.  
\end{definition}

Here are two essential properties of the \lex\ point. 
First, $\ga_\lexm$ depends only on the set of points visited by $\ga$, 
not on the order in which they are visited. 
Second, the lexicographical order of $\ZZ^d$ is invariant under translations;
thus the \lex\ time of $\ga$ is the same as that of any translate of $\ga$. 

In a variation (motivated by technical considerations) of the unfolding procedure, we will sometimes add a short walk between $\ga[0,k]$ and $\Rfl_{\gamma_k}\ga[k,n]$.
The specific unfolding that we will use is defined next. 
\begin{definition}
  For a walk $\ga \in \SAW_n$, define $\unf(\gamma) \in \SAW_{n+1}$ to be the concatenation of $\ga^1$, 
  the walk across the edge $e_1$, and the translation of $\Rfl_{\gamma_\lexm}\ga^2$ by $e_1$.
\end{definition}

In \cite{Madras12}, Madras used an unfolding argument to obtain a lower bound on the mean-square displacement of a uniform self-avoiding walk. 
A simple adaptation of his technique proves that $\PSAW_n(\dist \Gamma_n \dist = 1) \leq \tfrac{2}{3}$ for all $n \geq 3^d + 1$.
We now sketch this argument since, for example, the proof of Theorem~\ref{t.nonclosing} may be viewed as an elaboration. 

To any walk $\ga \in \SAW_n$ with $\dist \gamma_n \dist = 1$, 
choose an axial direction in which $\ga$ has maximal coordinate at least two -- we will assume this to be the $e_1$-direction -- 
and associate to $\ga$ its simple unfolding $b \in \SAW_{n}$, 
given by concatenating the $e_1$-reflection of $\ga^2$ to $\ga^1$.
Any unfolded walk $b$ corresponds to at most two walks $\ga$ with $\dist \gamma_n \dist = 1$.
This is because the level at which the unfolding was done is one among 
$\frac{ \langle b_n | e_1 \rangle - 1}{2} $, 
$\frac{ \langle b_n | e_1 \rangle}{2} $ and 
$\frac{ \langle b_n | e_1 \rangle + 1}{2}$,
of which at most two are integers.

Moreover, since the maximal  $e_1$-coordinate of $\ga$ is at least two, 
the unfolding of $\ga$ is such that $\dist b_n \dist > 1$. 
The choice of axial direction for reflection may be made in such a way that it may be determined from the unfolded walk. 
Thus,  $\PSAW_n \left(\dist \Gamma_n \dist = 1 \right) \leq \frac23$.
 
\subsection{The Hammersley-Welsh bound} 
 
For $n \in \N$, let $c_n = |\SAW_n|$ denote the number of length-$n$ walks that start at the origin.
Write $\mu_c \in (0,\infty)$ for the connective constant, given by $\mu_c = \lim_n c_n^{1/n}$.
Its existence follows from the submultiplicativity inequality $c_{n+m} \leq c_n c_m$ and Fekete's lemma.
Furthermore, these also imply that~$c_n^{1/n}$ converges to~$\mu_c$ from above.
This provides a lower bound on $c_n$. 

The value of $\mu_c$ is not rigorously known for any lattice $\ZZ^d$ with $d \geq 2$.
The only non-trivial lattice for which the connective constant (for unweighted self-avoiding walks) has been rigorously derived is the hexagonal lattice, see \cite{DS10}, using parafermionic observables (also see \cite{BBDDG13,Gla13} for other applications of parafermionic observables, including the computation of the connective constant for a model of weighted walks). 

An upper bound for the growth rate of $c_n$ is provided by the Hammersley-Welsh argument of \cite{hammersleywelsh}
(which is proved by an iterative unfolding procedure).
It states the existence of a constant $c_{HW} > 0$ such that, for all $n \in \NN$, 
\begin{equation}\label{eq.hw}
  \mu_c^n \leq c_n \leq e^{c_{HW} \sqrt{n}} \mu_c^n.
\end{equation}
 
\section{The shell of a walk: definition and applications}\label{sec:pattern}

The shell of a walk, defined next, is a notion that appeared implicitly in Kesten's proof of the pattern theorem in~\cite{kestenone}. 
In the next subsections, we present two consequences of the pattern theorem which may be of some general use.
We illustrate this by the proof of Proposition \ref{prop:midpoint}.

\begin{definition}[Type I/II patterns] 
  A pair of type I and II patterns is a pair of self-avoiding walks $\chi^I$, $\chi^{II}$,  
  both contained in the cube $[0,3]^d$, with the properties that
  \begin{itemize*}
  \item $\chi^I$ and $\chi^{II}$ both visit all vertices of the boundary of $[0,3]^d$,
  \item $\chi^I$ and $\chi^{II}$ both start at   $(3, 1, \dots 1)$ and end at $(3,2, 1 \dots, 1)$,
  \item $|\chi^{II}| = |\chi^{I}| + 2$.
  \end{itemize*}
\end{definition}
Figure \ref{fig:patterns} contains examples of such patterns for $d =2$. 
The existence of such pairs of walks for any dimension $d \geq 2$ may be easily checked, and no details are given here. 
Fix a pair of type I and II patterns for the rest of the paper. 

A pattern $\chi$ is said to occur at step $k$ of a walk $\ga$ 
if $\ga[k, k + |\chi|]$ is a translate of $\chi$.
In such case call the translate of $[0,3]^d$ containing $\ga[k, k + |\chi|]$ a {\em slot} of $\ga$. 
A walk may have several occurrences of both type I and II patterns.  
Note that occurrences of such patterns are necessarily disjoint. 

\begin{figure}
  \begin{center}
    \includegraphics[width=0.4\textwidth]{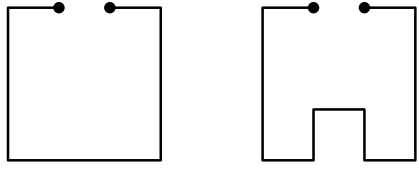}
  \end{center}
  \caption{An example of type I and II patterns for $d =2$.}
  \label{fig:patterns}
\end{figure}

\begin{definition}[Shell of a walk]
  Two self-avoiding walks are equivalent if one can be obtained from the other 
  by changing some type I patterns into type II patterns and some type II patterns into type I patterns.  
  Classes for this equivalence relation are called {\em shells}. 
\end{definition}

All walks of a given shell have the same slots,
thus a shell may be viewed as a walk with certain slots where type I and II patterns may be inserted. 
Note that a shell may contain walks of different lengths, depending on the number of type I and II patterns inserted in the slots.
Shells are convenient to work with as many interesting events may be written in terms of the shell of the walk 
(see below for examples of such events). 

The shell of the walk $\ga$ is denoted $\varsigma(\ga)$; 
$T_I(\ga)$ and $T_{II}(\ga)$ denote the number of occurrences for patterns of type I and II in $\ga$. 
Observe that $T_I(\ga)$ and $T_{II}(\ga)$ are determined by $\varsigma(\ga)$ and the length of $\ga$.
When the random variable $\Ga$ is involved, we will sometimes drop the explicit dependence of $T$ on $\Ga$. 

By Kesten's pattern theorem \cite[Thm. 1]{kestenone}, there exist constants $c > 0$ and $\de >0$ such that,
for any $n \geq d 3^d$,
\begin{equation}\label{eq.patterns}
  \PSAW_n \left( T_I(\Ga) \leq \de n \right) \le e^{-c n} 
  \quad \text{ and } \quad
  \PSAW_n \left( T_{II}(\Ga) \leq \de n \right) \le e^{-c n}.
\end{equation}
  
\subsection{Shell probabilities are stable under perturbation of walk length}

The following lemma states that, when considering typical events expressed only in terms of shells, 
their $\saw_n$ and $\saw_{n + 2k}$ probabilities are comparable for $k$ small enough.
The lemma will be instrumental when applying multi-valued map arguments. 

Let $A$ be a collection of shells and let 
$$
A_n=\{\gamma\in\SAW_n:\varsigma(\gamma)\in A\}.
$$ 
The set $A$ may be chosen as, for example, the set of half-space walks, bridges, irreducible bridges 
or walks with certain constraints on the number of renewal times 
(the definitions of these notions are given below). 
This flexibility in the choice of $A$ may render the lemma useful in a broad context.
We will use it in the proof of Theorem~\ref{theorem:SAW_to_x}, more precisely in Proposition~\ref{prop:M}. 

\begin{lemma}\label{lem:stability}
  For any $c>0$, there exists $C > 0$ such that the following occurs. 
  Let $A$ be a set of shells and~$n$ be an integer such that $|A_n|\ge e^{-c\sqrt n}\mu_c^n$. 
  Then, for any $0\le k\le n^{1/5}$ such that~$A_{n-2k}\ne\varnothing$,
  \begin{equation}\label{eq.stability}
    |A_{n-2k}|\ge C |A_n|\mu_c^{-2k}.
  \end{equation}
\end{lemma}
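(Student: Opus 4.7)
The strategy is to apply the multi-valued map principle (Lemma~\ref{prop:multivalmap}) to a map $\Phi$ that swaps $k$ of the type~II patterns of a walk in $A_n$ into type~I patterns, producing a walk in $A_{n-2k}$ of length $2k$ less.

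The first step reduces to ``typical'' walks. Combining the hypothesis $|A_n|\ge e^{-c\sqrt n}\mu_c^n$ with the Hammersley--Welsh bound~\eqref{eq.hw} yields $\PSAW_n(A_n)\ge e^{-(c+c_{HW})\sqrt n}$. By Kesten's pattern theorem~\eqref{eq.patterns}, the walks in $\sawset_n$ with $T_I(\Gamma)<\delta n$ or $T_{II}(\Gamma)<\delta n$ have $\PSAW_n$-measure at most $2e^{-c_P n}$ (where $c_P,\delta$ are the constants of~\eqref{eq.patterns}), which is exponentially smaller than the density of $A_n$ in $\sawset_n$. Consequently, for $n$ large,
\[
\tilde A_n := \{\gamma \in A_n : T_I(\gamma) \ge \delta n \text{ and } T_{II}(\gamma) \ge \delta n\}
\]
satisfies $|\tilde A_n|\ge |A_n|/2$.

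I then define $\Phi:\tilde A_n\to\mathfrak P(A_{n-2k})$ by letting $\Phi(\gamma)$ consist of the $\binom{T_{II}(\gamma)}{k}$ walks obtained from $\gamma$ by swapping any $k$ of its type~II patterns to type~I (well-defined since $\delta n\ge k$ for $n$ large). For $\gamma'\in A_{n-2k}$, the preimages $\gamma\in\Phi^{-1}(\gamma')$ share $\gamma'$'s shell, satisfy $T_{II}(\gamma)=T_{II}(\gamma')+k$ (so $|\Phi(\gamma)|$ is constant on $\Phi^{-1}(\gamma')$), and correspond to the $\binom{T_I(\gamma')}{k}$ choices of $k$ type~I patterns in $\gamma'$ to be reverse-swapped into type~II patterns; the resulting $\gamma$ lies in $\tilde A_n$ provided $T_I(\gamma')\ge \delta n+k$ and $T_{II}(\gamma')\ge \delta n-k$. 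Lemma~\ref{prop:multivalmap} therefore gives
\[
|\tilde A_n| \;=\; \sum_{\gamma'\in A_{n-2k}} \Lambda_\Phi(\gamma'), \qquad \Lambda_\Phi(\gamma') \;=\; \binom{T_I(\gamma')}{k}\bigg/ \binom{T_{II}(\gamma')+k}{k}.
\]

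The crucial and most delicate step is to show $\max_{\gamma'}\Lambda_\Phi(\gamma')\le C_1\mu_c^{2k}$ for a constant $C_1=C_1(c)>0$. The walks $\gamma'$ contributing to the sum satisfy $T_I(\gamma'),T_{II}(\gamma')\in[\delta n/2,n/|\chi^I|]$ for $n$ large, where the upper bound comes from the fact that each pattern occupies at least $|\chi^I|$ edges, forcing $T_I(\gamma')+T_{II}(\gamma')\le (n-2k)/|\chi^I|$. Writing
\[
\Lambda_\Phi(\gamma') \;=\; \prod_{i=0}^{k-1} \frac{T_I(\gamma')-i}{T_{II}(\gamma')+k-i},
\]
each of the $k$ factors is bounded by a constant of order $1/(\delta|\chi^I|)$; the main obstacle is expressing this constant in terms of $\mu_c^2$, which I expect follows from exploiting the specific geometry of the patterns $\chi^I,\chi^{II}$ together with a finer comparison (possibly by averaging $\Lambda_\Phi$ over $A_{n-2k}$ rather than taking the maximum pointwise). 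Once the bound $\max\Lambda_\Phi\le C_1\mu_c^{2k}$ is established, Lemma~\ref{prop:multivalmap} gives $|A_{n-2k}|\ge |\tilde A_n|/\max\Lambda_\Phi\ge |A_n|/(2C_1\mu_c^{2k})$, the desired estimate with $C=1/(2C_1)$.
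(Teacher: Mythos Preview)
Your reduction to $\tilde A_n$ and the multi-valued map $\Phi$ are fine, and the exact formula for $\Lambda_\Phi(\gamma')$ is correct. The gap is precisely at the step you flag as ``crucial and most delicate'': the bound $\max_{\gamma'}\Lambda_\Phi(\gamma')\le C_1\mu_c^{2k}$ cannot be obtained from ``the specific geometry of the patterns $\chi^I,\chi^{II}$''. Each factor in your product is of order $T_I(\gamma')/T_{II}(\gamma')$, and for a generic shell this ratio has no relation whatsoever to $\mu_c^2$; indeed $\mu_c$ is a global growth rate of the entire model and carries no information about the two fixed patterns. A walk whose shell has, say, $T_I(\gamma')\approx n/|\chi^I|$ and $T_{II}(\gamma')\approx\delta n$ would give $\Lambda_\Phi(\gamma')\approx\big(1/(\delta|\chi^I|)\big)^k$, a constant to the $k$-th power that need not be $\le C_1\mu_c^{2k}$. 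Averaging over $A_{n-2k}$ does not help either, because you have no a priori control on the distribution of $T_I/T_{II}$ within $A_{n-2k}$.

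The paper's argument brings in $\mu_c$ by an entirely different mechanism. It applies the one-pattern swap map (your $\Phi$ with $k=1$) together with Cauchy--Schwarz to prove that the successive ratios $r_m:=|\tilde A_{m+2}|/|\tilde A_m|$ are almost non-increasing: $r_m-r_{m+2}\le O(1/n)$. This ``almost concavity'' means that if some $r_m$ with $m\in[n-2n^{1/5},n]$ exceeds $\mu_c^2+n^{-1/5}$, then all subsequent $r_{m+2\ell}$ for $\ell\le n^{3/4}$ stay above $\mu_c^2+\tfrac12 n^{-1/5}$; multiplying these out forces $|\tilde A_{n+2n^{3/4}}|$ to exceed the Hammersley--Welsh bound, a contradiction. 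Thus $r_m\le\mu_c^2+n^{-1/5}$ for all relevant $m$, and multiplying $k\le n^{1/5}$ such ratios gives the claim. The link to $\mu_c$ enters only through Hammersley--Welsh, not through any local combinatorics of the patterns; your proposal is missing this bootstrapping idea.
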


The argument by which we will derive the lemma is a direct adaptation of one in \cite{kestenone}, where the claim is proved for $A_n=\SAW_n$. 
In this case, the result can be improved to $|k|\le n^{1/3}$ 
due to the submultiplicativity property of the number of self-avoiding walks.  
For the uses we have in mind, $n^{1/5}$ is sufficient. 

\begin{proof}
  Fix $c >0$. It suffices to prove the statement for $n$ large enough
  (the specific requirements on $n$ will be indicated in the proof). Fix a value $n$. 

  Let $\de, c_\de > 0$ be constants such that \eqref{eq.patterns} holds. 
  For $\ell \in \Z$, define
  \begin{align*}
    \tilde A_{n+2\ell} &=\big\{ \gamma\in A_{n+2\ell}:T_{II}(\gamma)>\delta n+\ell \big\}.
  \end{align*} 
  Note that the assumption $|A_n|\ge e^{-c\sqrt n}\mu_c^n$ and the choice of $\de$ yield
  $$ |A_n\setminus\tilde A_n|\le e^{-c_{\de}n} |SAW_n| \le e^{-c_\de n}e^{(c_{HW} + c)\sqrt{n}} |A_n|.$$
  As a consequence, for $n$ larger than some value depending on $c_\de, c_{HW}$ and $c$, 
  \begin{equation}\label{eq:ab}
    |\tilde A_n|\ge \tfrac12 |A_n| \ge  \tfrac12 e^{-c\sqrt n}\mu_c^n.
  \end{equation}
  This will be useful later, and henceforth we assume that $n$ is sufficiently large for \eqref{eq:ab} to hold. Let us also assume that $\de n \geq n^{3/4}$.
  \medbreak 

  We start by proving that, for any $\ell \in \N$ with $| \ell | \leq n^{3/4}$, when setting $m = n + 2\ell$, 
  \begin{equation}\label{eq:crucial}
    \frac{|\tilde A_{m+2}|}{|\tilde A_m|}-\frac{|\tilde A_{m+4}|}{|\tilde A_{m+2}|}\le \frac{2}{\delta^3n}\cdot\frac{|\tilde{A}_m|}{|\tilde{A}_{m+2}|}.
  \end{equation}  
  Consider the multi-valued map from $\tilde A_{m+2}$ into $\tilde A_m$ 
  that consists of replacing a type II pattern by a type I pattern. 
  The multi-valued map principle implies 
  \begin{align}\label{eq.Asum}
    |\tilde A_{m+2}|=\sum_{\gamma\in \tilde A_m}{\frac{T_I(\gamma)}{T_{II}(\gamma)+1}}.
  \end{align}
  Similarly, by considering the multi-valued map from $\tilde A_{m+4}$ into $\tilde A_m$ 
  that replaces two type II patterns by type I patterns, one obtains
  $$|\tilde A_{m+4}|=\sum_{\gamma\in \tilde A_m}{\frac{T_I(\gamma)(T_I(\gamma)-1)}{(T_{II}(\gamma)+1)(T_{II}(\gamma)+2)}}.$$
  It follows that
  \begin{align*}
    & \frac{|\tilde A_{m+2}|^2}{|\tilde A_m|} -
    |\tilde A_{m+4}|\\
    &\qquad=
    \left( 
      \sum_{\gamma\in \tilde A_m}1
    \right)^{-1}
    \left(
      \sum_{\gamma\in \tilde A_m}
      \frac{T_I(\gamma)}
      {T_{II}(\gamma)+1}
    \right)^2 -
    \sum_{\gamma\in \tilde A_m}
    \frac{T_I(\gamma)(T_I(\gamma)-1)}{(T_{II}(\gamma)+1)(T_{II}(\gamma)+2)}
    \\
    &\qquad\le \sum_{\gamma\in \tilde A_m}
    \left( \left(\frac{T_I(\gamma)}
        {T_{II}(\gamma)+1}\right)^2 -
      \frac{T_I(\gamma)(T_I(\gamma)-1)}{(T_{II}(\gamma)+1)(T_{II}(\gamma)+2)} \right)
    \le \frac{2}{\delta^3 n}|\tilde A_m|.
  \end{align*}
  The first inequality is due to Cauchy-Schwarz and the second, valid when $n$ is high enough, 
  to $T_{II}(\gamma) \geq \de n + \ell \geq \de n - n^{3/4}$ 
  and $T_I(\gamma) \leq m \leq n + 2 n^{3/4}$. 
  Dividing the above by $|\tilde{A}_{m+2}|$ yields~\eqref{eq:crucial}.
  \medbreak

  Let us now show that
  \begin{align}\label{eq.Afrac}
    \frac{|\tilde A_{n - 2k + 2}|}{|\tilde A_{n - 2k}|} < \mu_c^2+\frac{1}{n^{1/5}}, \qquad \text{ for all } 0 \leq k \le n^{1/5}.
  \end{align}
  Assume instead that for some $0 \leq k \le n^{1/5}$ and $m = n - 2k$, 
  \begin{align*}
    \frac{|\tilde A_{m + 2}|}{|\tilde A_{m}|} \ge \mu_c^2+\frac{1}{n^{1/5}}.
  \end{align*}
  In particular, this implies that $\frac{|\tilde A_{m}|}{|\tilde A_{m+2}|} \le 1$. 
  Using~\eqref{eq:crucial}, it may be shown by recurrence that, for~$n$ large enough and $\ell\le n^{3/4}+k$, 
  we have~$\frac{|\tilde A_{m+2\ell}|}{|\tilde A_{m+2\ell+2}|} \le 1$ and
  \begin{align*}
    \frac{|\tilde A_{m + 2\ell + 2}|}{|\tilde A_{m +2\ell}|}
    \ge \mu_c^2+\frac{1}{n^{1/5}}- \frac{2}{\delta^3n}\sum_{k=1}^{\ell}{\frac{|\tilde{A}_{m+2k-2}|}{|\tilde{A}_{m+2k}|}}
    \ge \mu_c^2+\frac{1}{2n^{1/5}}.
  \end{align*}
  Thus, 
  \begin{align*}
    |\tilde A_{n+2n^{3/4}}| 
    &\ge |\tilde A_n| \left(\mu_c^2+\frac{1}{2n^{1/5}}\right)^{n^{3/4}} \\
    &\ge \frac12e^{-c\sqrt n}\mu_c^n \left(\mu_c^2+\frac{1}{2n^{1/5}}\right)^{n^{3/4}} \\
    & > e^{c_{HW}\sqrt{n+2n^{3/4}}}\mu_c^{n+2n^{3/4}}.
  \end{align*}
  In the second inequality, we used \eqref{eq:ab},
  and, in the third, we assumed that  $n$ exceeds an integer that is determined by $c$, $c_{HW}$ and $\mu_c$. 
  The above contradicts the Hammersley-Welsh bound \eqref{eq.hw}, and \eqref{eq.Afrac} is proved.
  \medbreak 
  
We conclude by observing that \eqref{eq.Afrac} and \eqref{eq:ab} imply that, for all $k \leq n^{1/5}$, 
  $$\tfrac12|A_n|\le |\tilde A_n|\le \left(\mu_c^2+\frac{1}{n^{1/5}}\right)^k|\tilde A_{n-2k}|\le C \mu_c^{2k}|\tilde A_{n-2k}|\le C\mu_c^{2k}|A_{n-2k}|,$$
  where $C$ is some constant depending only on $\mu_c$. 
\end{proof}
 
\begin{remark}
  If one assumes that $|A_m|\ge e^{-c\sqrt m}\mu_c^m$ for any $m \in [n-n^{3/4}, n + n^{3/4}]$, then the same technique implies a stronger result.
  In addition to \eqref{eq.Afrac}, a converse inequality may be obtained by a similar argument. 
  Assuming $|A_{m+2}|/|A_{m}|\le \mu_c^2-\frac{1}{n^{1/5}}$ for some $m \in [n-2n^{1/5}, n + 2n^{1/5}]$
  leads to a contradiction by going backward instead of forward. 
  It follows that there exists a constant $C > 0$ such that the ratio 
  $\mu_c^{2k}|A_n|/|A_{n+2k}|$ is contained between $1/C$ and $C$ for all $|k|\le n^{1/5}$. 
  We expect that in most applications, the important bound will be the one given by Lemma~\ref{lem:stability}.  
\end{remark}

\subsection{Redistribution of patterns}

We present a technical result, Lemma~\ref{lem:balanced}, concerning the distribution of patterns of type I and II within a given typical shell. Roughly speaking,
when considering walks of a given length with a given shell, there is a specified number of type I patterns to be allocated into the available slots, and this allocation occurs uniformly. For a typical shell, the number of slots, and the number of type I patterns to be allocated into them, are macroscopic quantities, of the order of the walk's length. 
Thus, conditionally on a typical shell, the number of type I patterns in a macroscopic part of the walk has a Gaussian behaviour, with variance of the order of the walk's length.
 
Lemma~\ref{lem:balanced} will prove to be very useful: after its proof, we will derive Proposition~\ref{prop:midpoint}, our result concerning midpoint delocalization, as a corollary. 
The lemma will also play an important role in our quantitative study of endpoint delocalization in Section~\ref{sec:quantitative}.

Consider a shell  $\sigma$ and $(S_1,S_2)$ a partition of its slots.
For a walk $\ga \in \sigma$ and $i=1,2$, let $T^i_I(\ga)$ (and $T^i_{II}(\ga)$) be the number of type I (and type II) patterns in $S_i$. 
With this notation, $T_I=T^1_I+T^2_I$ and $T_{II}=T^1_{II}+T^2_{II}$. 

\begin{lemma}\label{lem:balanced}
  Let $\delta, \eps, \eps', C>0$. 
  There exists $N>0$ such that the following occurs. 
  Let $n \geq N$, $\sigma$ be a shell and $(S_1,S_2)$ be a partition of its slots with $|S_1|,|S_2|\ge \delta n$. 
  Suppose that $n$ and $\sigma$ are such that $T_I$ and $T_{II}$ are both larger than $\delta n$  
  (and recall that $T_I$ and $T_{II}$ are determined by $n$ and $\sigma$).
  Then,
  \begin{align}\label{eq.balanced}  
    \PSAW_n\left(\left| T^1_I(\Ga)-\frac{T_I |S_1|}{|S_1|+|S_2|}\right| 
      \ge \sqrt n (\log n)^{1/2 + \eps} \mcond \varsigma(\Gamma)=\sigma\right)
    \leq \frac{1}{n^C}.
  \end{align}
  Moreover, if $k_1, k_2$ are such that 
  $\left| k_i-\frac{T_I |S_1|}{|S_1|+|S_2|}\right|  \leq 2 \sqrt{n} (\log n)^{1/2 + \eps}$ 
  and $| k_1 - k_2 | \leq \sqrt{n}$, then
  \begin{align}\label{eq.resample}
    \frac
    {\PSAW_n\left( T^1_I(\Ga) = k_1 \mcond \varsigma(\Gamma)=\sigma \right) }
    {\PSAW_n\left( T^1_I(\Ga) = k_2 \mcond \varsigma(\Gamma)=\sigma \right) }
    \geq n^{-\eps'}.
  \end{align}
\end{lemma}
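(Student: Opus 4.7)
The plan is to identify the conditional distribution of $T^1_I(\Gamma)$ given $\varsigma(\Gamma) = \sigma$ as a hypergeometric distribution, and then apply a standard concentration inequality together with a local ratio estimate.

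First I would observe that, by the definition of shell equivalence, each walk $\gamma \in \sigma$ with $|\gamma| = n$ is uniquely specified by the subset of slots that carry a type I pattern (the remaining slots carry type II patterns), and the total number $T_I$ of such slots is determined by $n$ and $\sigma$. Since $\PSAW_n$ is uniform on $\SAW_n$, conditionally on $\{\varsigma(\Gamma)=\sigma\}$ this subset is uniformly distributed over all $T_I$-subsets of $S_1 \cup S_2$. Consequently $T^1_I(\Gamma)$, conditioned on $\varsigma(\Gamma) = \sigma$, follows the hypergeometric law with population $N = |S_1|+|S_2|$, $T_I$ successes and sample size $|S_1|$; its mean is $\mu := T_I|S_1|/N$. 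Since $N \leq n$, we in particular have $|S_1| \leq n$.

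For \eqref{eq.balanced}, I would invoke Hoeffding's inequality for sampling without replacement,
\[
\PSAW_n\left( |T^1_I(\Gamma) - \mu| \geq t \mcond \varsigma(\Gamma) = \sigma \right) \leq 2 \exp(-2 t^2 / |S_1|).
\]
Setting $t = \sqrt{n}(\log n)^{1/2+\eps}$ and using $|S_1| \leq n$ bounds the right-hand side by $2\exp(-2(\log n)^{1+2\eps})$, which is smaller than $n^{-C}$ for any prescribed $C$ once $n$ is large enough. For \eqref{eq.resample}, I would use the explicit hypergeometric mass function and study the ratio of consecutive probabilities
\[
R(k) := \frac{\PSAW_n(T^1_I = k+1 \mcond \varsigma(\Gamma)=\sigma)}{\PSAW_n(T^1_I = k \mcond \varsigma(\Gamma)=\sigma)} = \frac{(|S_1|-k)(T_I-k)}{(k+1)(|S_2|-T_I+k+1)}.
\]
A direct substitution gives $R(\mu) = 1 + O(1/n)$. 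The hypotheses $T_I, T_{II}, |S_1|, |S_2| \ge \delta n$ ensure that each of the four factors in $R(k)$ is of order $n$ throughout the window $|k-\mu| \le 2\sqrt{n}(\log n)^{1/2+\eps}$, so a term-by-term estimate yields $|\tfrac{d}{dk}\log R(k)| = O(1/n)$ there. Integrating from $\mu$ gives $|\log R(k)| = O((\log n)^{1/2+\eps}/\sqrt{n})$ on the window, and telescoping over the at most $|k_1 - k_2| \leq \sqrt{n}$ indices between $k_2$ and $k_1$ produces
\[
\left|\log \frac{\PSAW_n(T^1_I = k_1 \mcond \varsigma(\Gamma)=\sigma)}{\PSAW_n(T^1_I = k_2 \mcond \varsigma(\Gamma)=\sigma)}\right| \leq \sqrt{n} \cdot O\!\left(\frac{(\log n)^{1/2+\eps}}{\sqrt{n}}\right) = O((\log n)^{1/2+\eps}),
\]
which is $o(\log n)$; hence the ratio is bounded below by $n^{-\eps'}$ for any $\eps' > 0$ and $n$ large enough.

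The proof is largely mechanical once the hypergeometric identification is made; the main care point is tracking the uniform lower bound $\delta n$ on each of $T_I, T_{II}, |S_1|, |S_2|$ through the estimate on $R(k)$, because this is precisely what keeps every denominator macroscopic and ensures the per-step log-ratio is $O(1/n)$ rather than something that could accumulate to a positive power of $\log n$ over the $\sqrt{n}$ steps we traverse.
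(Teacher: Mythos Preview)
Your proof is correct and rests on the same starting observation as the paper's---that conditionally on $\varsigma(\Gamma)=\sigma$, the random variable $T^1_I$ is hypergeometric---but the execution is different. The paper applies Stirling's formula to the explicit mass function to obtain a local Gaussian approximation, and then reads off both \eqref{eq.balanced} and \eqref{eq.resample} from that single display, invoking unimodality of the hypergeometric to pass from a pointwise bound to the tail. You instead handle \eqref{eq.balanced} by a direct appeal to Hoeffding's inequality for sampling without replacement, and you obtain \eqref{eq.resample} by telescoping the exact consecutive-ratio formula, bypassing Stirling entirely. Your route is more elementary and the bookkeeping is lighter; the paper's route has the side benefit that the Gaussian approximation it derives is reused verbatim in the proof of Proposition~\ref{prop:midpoint} (midpoint delocalization), where one needs the pointwise $O(n^{-1/2})$ bound on the mass function rather than just a tail or ratio estimate. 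One cosmetic remark: your derivative-and-integrate language treats $k$ as a continuous variable, but this is harmless since $R$ extends to a smooth rational function on the window and all four factors stay of order $n$ there; the discrete telescoping then uses only its integer values.
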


Before the technical proof, we give a heuristic explanation. 
For simplicity, suppose that $S_1$ comprises the first $K$ slots of $\sigma$, and $S_2$ the remainder. 

Consider the random process $W: \{ 0,\ldots, T_I + T_{II} \} \to \{0,\ldots,T_{I}\}$ 
whose value at $0 \leq k \leq T_I + T_{II}$ is the number of type I patterns allocated into the first $k$ slots of $\sigma$. 
Under $\PSAW_n \left(\cdot| \varsigma(\Ga) = \sigma \right)$, 
$W$ is uniform in the set of trajectories of length $T_{I} + T_{II}$, with steps $0$ or $1$, starting at~$0$ and ending at~$T_{I}$.

In other words, consider the random walk with increments zero with probability $\tfrac{T_{II}}{T_I + T_{II}}$ 
and one with probability $\tfrac{T_{I}}{T_I + T_{II}}$.
Then $W$ has the law of this walk, conditioned on arriving at~$T_{I}$ at time $T_I + T_{II}$.
The assumptions on~$T_I$ and $T_{II}$ ensure that the variance of the increment of $W$ is bounded away from $0$. 

With this notation $T_I^1 = W_{K}$. 
The inequalities for $|S_1|$ and $|S_2|$ ensure that the point $K$ is not too close to the endpoints of the range of $W$. 
It follows from standard estimates on random walk bridges that $\frac{T_I^1}{\sqrt{T_I + T_{II}}}$ follows an approximately Gaussian distribution. 
If this approximation is used, then Lemma \ref{lem:balanced} follows by basic computations. 
Also, the probability that $W_K$ equals $\ell$ is at most $C n^{-1/2}$ for some constant $C > 0$ and any $\ell \in \ZZ$.
This last observation will be used in the proof of Proposition \ref{prop:midpoint}.

Finally we mention that a similar idea was used in~\cite{JROSTW} to obtain~$n^{-1/2}$~variations for the writhe
of self-avoiding polygons of length $n$.  

\begin{proof}[Proof of Lemma \ref{lem:balanced}]
  Fix $\delta, \eps, \eps'$ and $C$ strictly positive. 
  Let $n$, $\sigma$ and $(S_1, S_2)$ be as in the lemma. 
  The parameter $n$ will be assumed to be large in the sense that $n \geq N$ for some $N = N(\delta, \eps, \eps', C)$. 
  
  If $\Ga$ is distributed according to $\PSAW_n(. | \varsigma(\ga) = \sigma)$, then 
  the $T_I$ type I patterns and $T_{II}$ type II patterns are distributed uniformly in the slots of $\sigma$. 
  Thus, for $k \in \{0, \dots, |S_1| \}$,
  \begin{align}\label{eq:Tdistrib}
    \PSAW_n\left(T^1_I(\Ga)=k \mcond \varsigma(\Gamma)=\sigma\right) 
    = \frac{\binom{|S_1|}{k} \binom{|S_2|}{T_I - k}}{\binom{|S_1|+|S_2|}{T_I}}.
  \end{align}

  Write $m = |S_1| + |S_2|$, $|S_1| = \alpha m$ and   $T_I = \beta m$.
  By assumption $\alpha, \beta \in [\de , 1-\de]$ and $m \geq 2\de n$. 
  Let $Z = \frac{T^1_I}{\alpha \beta m} -1$. 
  Under $\PSAW_n\left( \cdot \mcond \varsigma(\Gamma)=\sigma\right)$, 
  $Z$ is a random variable of mean $0$,
  such that $\alpha \beta (1 + Z) m \in \ZZ \cap [0, \min \{ |S_1|,T_I \} ]$.
  
  First, we investigate the case where $Z$ is close to its mean, corresponding to the second part of the lemma.
  By means of a computation which uses Stirling's approximation and the explicit formula \eqref{eq:Tdistrib},  we find that
  \begin{align}\label{eq.zdistrib}
    \PSAW_n\left(Z = z \mcond \varsigma(\Gamma)=\sigma\right) 
    = (1 + o(1))
    \frac{ \exp \left(- \frac{\alpha \beta}{2(1-\alpha)(1-\beta)} m z^2 \right)}
    {\sqrt{2 \pi \alpha \beta (1 - \alpha)(1 - \beta) m}}, 
  \end{align}
  where $o(1)$ designates a quantity tending to $0$ as $n$ tends to infinity, 
  uniformly in the acceptable choices of $\sigma$, $S_1$, $S_2$ and $z$, 
  with $|z| \leq \frac{2 \sqrt{n}(\log n)^{1/2 +\eps}}{\alpha \beta m}$.

  Consider now $k_1, k_2$ be as in the second part of the lemma.
  Define the corresponding $z_i = \frac{k_i}{\alpha \beta m} - 1$,
  and note that $|z_i| \leq \frac{2\sqrt{n}(\log n)^{1/2 +\eps}}{\alpha \beta m}$
  and $|z_1  - z_2 | \leq \frac{\sqrt n }{\alpha \beta m}$. 
  By \eqref{eq.zdistrib},
  \begin{align*}
    \frac{ \PSAW_n\left(Z = z_1 \mcond \varsigma(\Gamma)=\sigma\right) }{\PSAW_n\left(Z = z_2 \mcond \varsigma(\Gamma)=\sigma\right) } 
    =  (1+o(1)) \exp \left( - \frac{1}{2 \de^5} (\log n)^{1/2 +\eps} \right)  
    \geq n^{-\eps'},
  \end{align*}
  for $n$ large enough. This proves \eqref{eq.resample}.

  We now turn to the deviations of $Z$ from its mean (this corresponds to the first part of the lemma). 
  From \eqref{eq:Tdistrib}, one can easily derive that
  $\PSAW_n\left(Z = z \mcond \varsigma(\Gamma)=\sigma\right)$ 
  is unimodal in $z$ with maximum at the value closest to $0$ that $Z$ may take
  (we remind the reader that $Z$ takes values in $\frac{1}{\alpha \beta  m } \Z - 1$, 
  which contains $0$ only if ${\alpha \beta  m } \in \Z$) .  
  For $ |z| \geq \frac{\sqrt{n}(\log n)^{1/2 +\eps}}{\alpha \beta m}$,
\eqref{eq.zdistrib} implies the existence of constants $c_0, c_1 > 0$ depending only on $\de$ such that, for $n$ large enough, 
  \begin{align}\label{eq.balanced0}
    \PSAW_n\left(Z = z \mcond \varsigma(\Gamma)=\sigma\right) \leq
    \frac{\exp\left( - c_0 ( \log n)^{1+ 2\eps}\right)}{c_1  \sqrt{n}  }
    \leq n^{- C - 1}.
  \end{align}
  Since $T^1_I$ takes no more than $n$ values, \eqref{eq.balanced0} implies \eqref{eq.balanced}.
\end{proof}

We are now in a position to prove Proposition~\ref{prop:midpoint}.
\begin{proof}[Proof of Proposition \ref{prop:midpoint}]
  It suffices to prove the statement for $n$ large. 
  Let $n \in \NN$ and $x \in \ZZ^d$.

  Consider a shell $\si$ with $x \in \ga$ for some walk $\ga \in \si$. 
  Let $S_1$ be the slots of $\si$ before $x$ and let $S_2$ be the ones after. 
  We will omit here the case where $x$ is a point contained in one of the slots;
  this is purely a technical issue and does not change the proof in any significant way. 
  
  Consider walks $\ga \in \SAW_n$ with $\varsigma (\ga) = \si$.
  Let $t_\si$ be the number of type~I patterns that such a walk needs to have in $S_1$ so that $\ga_{\lfloor \frac{n}{2} \rfloor} = x$, 
  if such a number exists and is contained in $[0, \min \{ |S_1|, T_I \} ]$.
  Denote $\sS$ the set of shells for which $t_\si$ is well defined. 
  Thus, if $\ga$ has midpoint $x$, then $\varsigma(\ga) \in \sS$. 
  We may therefore write
  \begin{align}\label{eq.shell_decomp}
    \PSAW_n\left(\Ga_{\lfloor \frac{n}{2} \rfloor} = x \right) = 
    \sum_{\si \in \sS}  \PSAW_n\left( T^1_I = t_\si , \varsigma(\Ga) = \si \right).
  \end{align}
  There exist constants $\de ,c > 0$ such that 
  \begin{align}\label{eq.patterns2}
    &\PSAW_n \left( \Ga[0, \tfrac{n}{4}] \text{ contains fewer than $\de n$ type I patterns} \right) \\
    &\le  \frac{c_{n / 4} c_{3n / 4}}{c_n} \PSAW_{n/4} \left( \Ga\text{ contains fewer than $\de n$ type I patterns}\right) \nonumber \\
    &\le e^{2c_{HW}\sqrt n}\PSAW_{n/4} \left( \Ga\text{ contains fewer than $\de n$ type I patterns} \right) \nonumber
    < e^{-c n},
  \end{align}
  where the second inequality comes from the Hammersley-Welsh bound \eqref{eq.hw}, and the third from \eqref{eq.patterns}.
  The same holds for type II patterns, and for $\Ga[\tfrac{3n}{4}, n]$ instead of $\Ga[0, \tfrac{n}{4}]$.
  It follows that 
  \begin{align}\label{eq.smallS}
    \PSAW_n \big( \min(|S_1|,|S_2|,T_I,T_{II}) < \de n \big) < 4e^{-c n}.
  \end{align}
  For shells $\si \in \sS$ such that $|S_1| \geq \de n$ and $|S_2| \geq \de n$, 
   \eqref{eq.zdistrib} gives
  \begin{align}\label{eq.Tuncertainty}
    \PSAW_n\left( T^1_I = k \mcond \varsigma(\Ga) = \si \right) \leq C n^{- 1/2},
  \end{align}
  for some $C > 0$, any $k \in \NN$ and $n$ sufficiently large (i.e., larger than some value depending only on $\de$). 

  By applying \eqref{eq.smallS} and \eqref{eq.Tuncertainty} to \eqref{eq.shell_decomp}, we obtain 
  \begin{align*}
    \PSAW_n\left(\Ga_{\lfloor \frac{n}{2} \rfloor} = x \right) 
    \leq 4e^{-c n} + C n^{- 1/2} 
    \leq 2C n^{- 1/2},
  \end{align*}
  for $n$ sufficiently large. 
\end{proof}

\section{Delocalization of the endpoint}\label{sec:uniform}

This section is devoted to the proof of Theorem~\ref{theorem:SAW_to_x}. Let us begin with some general definitions. 
A walk $\gamma\in \sawset_n$ is called a {\em bridge} if  
\begin{align*}
  \langle\g_0| e_1\rangle < \langle\g_k| e_1\rangle \le \langle\g_{n}| e_1\rangle , 
  \quad \text{ for $ 0 < k \le n $}.
\end{align*}
Write $\sabset_n$ for the set of bridges of length $n$ 
and $\sab_n$ for the uniform measure on $\sabset_n$.

\begin{figure}[htb]
  \begin{center}
    \cpsfrag{ren}{$\ga_k$}
    \cpsfrag{zren}{$\ga_\ell$}
    \cpsfrag{hang}{$\ga_\hang$}
    \cpsfrag{start}{$O$}
    \cpsfrag{end}{$\ga_n$}
    \includegraphics[width=0.25\textwidth]{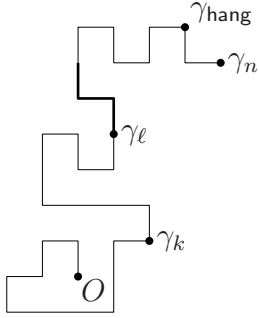}
  \end{center}
  \caption{A walk $\ga$ with a renewal point $\ga_k$, 
    a $z$-renewal point $\ga_\ell$ and hanging point $\ga_\hang$.
    The bold structure beyond the point $\ga_\ell$ helps to ensure that $\ell$ is a $z$-renewal time.
    Both $\ga[k, \ell]$ and $\ga[k, \hang]$  are bridges.}
  \label{fig:zren}
\end{figure}

For $\ga \in \sawset_n$, an index $k \in [0,n]$ is a {\em renewal time} 
if $\langle\gamma_i | e_1\rangle \le \langle\gamma_{k} | e_1\rangle$ for  $0 \leq i < k$ and
$\langle\gamma_i | e_1\rangle > \langle\gamma_{k} | e_1\rangle$ for  $n \geq i > k$. 
Because it simplifies the proof of the next subsection in a substantial way, we introduce the notion of $z$-renewal. An index $k \in [0,n-2]$ is a {\em $z$-renewal time} if 
\begin{itemize}
\item $\langle\gamma_i | e_1\rangle < \langle\gamma_{k+1} | e_1\rangle$ for  $0 \leq i < k+1$,
\item $\langle\gamma_{k+1} | e_1\rangle = \langle\gamma_{k+2} | e_1\rangle$,
\item  $\langle\gamma_i | e_1\rangle > \langle\gamma_{k+1} | e_1\rangle$ for  $n \geq i > k+2$. 
\end{itemize}
Note that a $z$-renewal time is necessarily a renewal time. 
Let $\ren_\gamma$ denote the set of $z$-renewal times of $\gamma$. A ($z$-)renewal point is a point of the form $\gamma_k$ where $k$ is a ($z$-)renewal time.

\subsection{The case of bridges} 

Let $\pi_1$ be the orthogonal projection from $\ZZ^d$ onto the hyperplane 
$\mathbb H=\{x\in \ZZ^d:\langle x | e_1\rangle = 0\}$, 
that is,  for any $x\in \ZZ^d$,
$$ \pi_1(x) = x - \langle x| e_1\rangle e_1.$$

Our first step on the route to Theorem~\ref{theorem:SAW_to_x} is its analogue for bridges. 
Bridges are easier to handle due to their renewal and $z$-renewal points.

\begin{proposition}\label{lemma:SAB_to_x}
We have that
  $\lim_n \sup_{v \in \mathbb{H}} \sab_n\left( \pi_1(\Gamma_n) = v \right) = 0$.
\end{proposition}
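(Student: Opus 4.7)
The plan is to exploit the $z$-renewal structure of bridges via a ``flip and count'' argument, which in fact yields the stronger quantitative bound $\sup_v \sab_n(\pi_1(\Gamma_n) = v) = O(n^{-1/2})$. The key local move is as follows. At any $z$-renewal time $k$ of a bridge $\gamma$, with $L := \langle \gamma_{k+1}|e_1\rangle$, the horizontal step $u_k := \gamma_{k+2} - \gamma_{k+1}$ may be replaced by $-u_k$. This modification leaves every $e_1$-coordinate unchanged, so the resulting walk $\gamma'$ is still a bridge and every $z$-renewal time of $\gamma$ is preserved; the only geometric effect is that the tail $\gamma[k+2,n]$ is translated by $-2u_k$. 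Self-avoidance is automatic: $\gamma[0,k+1]$ lies in $\{\langle x|e_1\rangle \le L\}$ while $\gamma'[k+2,n]$ lies in $\{\langle x|e_1\rangle \ge L\}$, and the unique vertices of the two parts at level $L$ are $\gamma_{k+1}$ and $\gamma_{k+1}-u_k$, which are distinct. Consequently, flipping any subset $F \subseteq \ren_\gamma$ of $z$-renewals produces a valid bridge with endpoint $\gamma_n - 2\sum_{k \in F} u_k$, and the $z$-renewal set of the new walk is still $\ren_\gamma$.

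The first ingredient, which I regard as the main obstacle, is an analogue of Kesten's pattern theorem for $z$-renewals: there exist $\delta, c > 0$ such that $\sab_n(|\ren_\Gamma| < \delta n) \le e^{-cn}$ for all $n$ large enough. The difficulty is that the $z$-renewal condition is non-local (pre-walk strictly below, post-walk strictly above a given level), so~\eqref{eq.patterns} cannot be invoked directly. I would combine the irreducible-bridge decomposition, which gives $\Theta(n)$ renewal times in a typical bridge, with a local pattern argument inside each irreducible piece to show that a positive fraction of pieces end with a horizontal step followed by an upward step, which produces a $z$-renewal at the interface with the next piece.

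Given this input, the rest is combinatorial. Define an equivalence relation on $\sS^* := \{\gamma \in \sabset_n : |\ren_\gamma| \ge \delta n\}$ by declaring $\gamma \sim \gamma'$ iff $\gamma'$ is obtained from $\gamma$ by flipping the $u_k$'s at some $F \subseteq \ren_\gamma$. Since flipping preserves the $z$-renewal set and is involutive, this is well-defined, and each class $C$ has exactly $2^{Z(C)}$ elements, with $Z(C) := |\ren_\gamma|$ common to $\gamma \in C$. In dimension $d = 2$, writing $u_k = \sigma_k e_2$ with $\sigma_k \in \{\pm 1\}$ and letting $N^\pm(C)$ denote the numbers of $z$-renewals in $C$ with $\sigma_k = \pm 1$, Vandermonde's identity gives that the number of walks in $C$ whose endpoint has a prescribed horizontal shift $-2s\,e_2$ from a reference walk equals $\binom{Z(C)}{N^-(C)+s} \le \binom{Z(C)}{\lfloor Z(C)/2 \rfloor} \le C_0\, 2^{Z(C)}/\sqrt{Z(C)}$. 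The same upper bound holds for $d \ge 3$ by restricting to $\pm u_k$ flips only (the potentially multi-dimensional count of flips yielding a given total shift factors along each axial direction, and the worst case is when all $z$-renewals share a common axis). Setting $\sS_v := \{\gamma \in \sabset_n: \pi_1(\gamma_n) = v\}$ and summing over classes,
\begin{align*}
|\sS_v \cap \sS^*| \le \sum_{C \subseteq \sS^*} \frac{C_0\, 2^{Z(C)}}{\sqrt{Z(C)}} \le \frac{C_0}{\sqrt{\delta n}} \sum_C 2^{Z(C)} \le \frac{C_0}{\sqrt{\delta n}}\, |\sabset_n|,
\end{align*}
which combined with the bound $|\sS_v \setminus \sS^*| \le e^{-cn}|\sabset_n|$ from Step 1 gives $\sup_{v \in \mathbb{H}} \sab_n(\pi_1(\Gamma_n) = v) = O(n^{-1/2})$, strictly stronger than the claimed convergence to zero.
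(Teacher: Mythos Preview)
Your flip argument in Step~2 is correct and is essentially the same local move the paper uses (see the paper's Proposition~\ref{prop:Mvariation}): at each $z$-renewal one may replace the horizontal step by any other horizontal step, and since the $z$-renewal condition depends only on $e_1$-coordinates, the new walk is again a bridge with the same $z$-renewal set. Your binomial count then gives the sharper $O(n^{-1/2})$ provided the input from Step~1 holds.

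The genuine gap is Step~1. You assert that ``the irreducible-bridge decomposition gives $\Theta(n)$ renewal times in a typical bridge'' and then upgrade to $z$-renewals by a local modification. But the $\Theta(n)$ claim is not known in dimensions $2$, $3$, $4$: it is equivalent to finiteness of the mean irreducible-bridge length at the critical fugacity, i.e.\ $\sum_n n\,|\mathrm{iSAB}_n|\,\mu_c^{-n} < \infty$, which is a well-known open problem (Ornstein--Zernike behaviour for SAW). The irreducible decomposition by itself gives no such density statement; it only says a bridge is a concatenation of irreducible pieces, whose number could be $o(n)$ if those pieces have heavy tails. So your sketch assumes precisely the hard part.

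The paper sidesteps this entirely. It proves only the much weaker statement that for any \emph{fixed} $M$, $\sab_n(|\ren_\Gamma| < M) \to 0$ (Proposition~\ref{prop:M}), and this already suffices for the qualitative conclusion via the flip move. The proof of Proposition~\ref{prop:M} is itself non-trivial: it does not use a pattern theorem but rather the shell-stability Lemma~\ref{lem:stability} together with the divergence of the bridge generating function at $\mu_c^{-1}$, by building a concatenation map that creates one extra $z$-renewal per bridge appended. If you weaken your Step~1 to this statement, your Step~2 still works (with $M$ in place of $\delta n$) and recovers the paper's qualitative result, but the $O(n^{-1/2})$ rate is lost.
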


This proposition follows from the next two statements. 
The first shows that typical bridges have many $z$-renewal times, and 
the second that the endpoint of a bridge with many $z$-renewal times is delocalized. 
\begin{proposition}\label{prop:M}
  For any $M\in \NN$, $\lim_n \sab_n \left( |\ren_\Gamma|< M \right) = 0$.
\end{proposition}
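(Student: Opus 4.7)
The plan is to show that $B^M_n := \{\gamma \in \sabset_n : |\ren_\gamma| < M\}$ forms a vanishingly small fraction of $\sabset_n$, via a multi-valued map argument combined with Lemma~\ref{lem:stability}.

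First, I would construct a multi-valued map $\Phi \colon B^M_n \to \sabset_{n+2}$ that takes a bridge $\gamma$ with few $z$-renewals and produces many length-$(n+2)$ bridges, each carrying an additional $z$-renewal. The candidate insertion locations are the renewal times $k_j$ of $\gamma$---of which there are typically linearly many, by decomposing $\gamma$ into irreducible bridges together with the Hammersley--Welsh bound. At each candidate $k_j$, I would insert a two-edge $Z$-shaped detour just above the renewal point $\gamma_{k_j}$: lift the tail $\gamma[k_j + 1, n]$ upward by $e_1$, then connect via one vertical and one horizontal step forming a $z$-renewal at level $\langle \gamma_{k_j}|e_1\rangle + 1$. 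The renewal property ensures $\gamma[0, k_j]$ lies at or below this level, so the insertion preserves self-avoidance and yields a valid bridge of length $n+2$.

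Next, each output bridge $\gamma'$ carries the inserted $z$-renewal among its $z$-renewals; since $\gamma$ contributed at most $M - 1$ $z$-renewals, $\gamma'$ has at most $M$, so the preimage $\Phi^{-1}(\gamma')$ has size at most $M$. Meanwhile $|\Phi(\gamma)| \geq \delta n$ on all but an exponentially small subset of $B^M_n$, which may be discarded. The multi-valued map principle then yields $|B^M_n| \leq \tfrac{M}{\delta n}\, |\sabset_{n+2}|$. Lemma~\ref{lem:stability}, applied to $A$ equal to the set of all bridge shells (for which $|A_n| = |\sabset_n| \geq \mu_c^n$), gives $|\sabset_{n+2}| \leq C \mu_c^2 |\sabset_n|$, and hence
\[
\sab_n\big(|\ren_\Gamma| < M\big) \;\leq\; \frac{C' M}{\delta n} \;\longrightarrow\; 0.
\]

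The main obstacle will be the design of the local insertion: it must be performed at linearly many locations, preserve the bridge property, create exactly one new $z$-renewal, and be reconstructible from the output. Verifying that a typical bridge has linearly many renewal times also needs attention---this follows from a concatenation decomposition into irreducible bridges combined with Hammersley--Welsh, but care is required to ensure that after inserting the $Z$-detour the other renewal times of $\gamma$ remain renewals. A further subtle point is checking that Lemma~\ref{lem:stability} applies to ``bridge shells'': pattern swaps change heights within a slot by at most $3$, so the bridge condition is essentially preserved by swaps at slots away from the top, and a bit of bookkeeping shows the set of bridge shells has the abundance $|A_n| \geq \mu_c^n e^{-O(\sqrt n)}$ required by the lemma's hypothesis.
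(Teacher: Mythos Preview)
Your approach has a genuine gap at its core step: the claim that a typical bridge of length $n$ has linearly many renewal times. You assert this ``follows from a concatenation decomposition into irreducible bridges combined with Hammersley--Welsh,'' but it does not. The unique decomposition of a bridge into irreducible bridges shows only that the number of renewal times equals the number of irreducible pieces; to conclude that this number is typically of order $n$ you would need the expected length of an irreducible bridge at criticality to be finite, i.e.\ $\sum_k k\,|\mathrm{iSAB}_k|\,\mu_c^{-k} < \infty$. This is not known in dimensions $d \in \{2,3,4\}$, and in fact is expected to \emph{fail} there. The Hammersley--Welsh bound controls the sub-exponential correction to $c_n$, but says nothing about the tail of irreducible bridge lengths. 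Without a lower bound on the number of insertion sites that tends to infinity, the multi-valued map principle gives you nothing useful.

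The paper's proof sidesteps this difficulty entirely. Rather than trying to find many insertion sites inside a single bridge, it concatenates a bridge $\gamma_1 \in \SAB^M_{n-2j}$ with the two edges $e_1,e_2$ and an \emph{arbitrary} bridge $\gamma_2 \in \SAB_{2j-2}$, for each $j \leq n^{1/5}$. The inverse map is controlled because the splice point must be among the first $M$ $z$-renewals of the output. Lemma~\ref{lem:stability} is applied not to $\SAB$ but to $\SAB^M$ itself, yielding $|\SAB^M_{n-2j}| \geq C\mu_c^{-2j}|\SAB^M_n|$. Summing over $j$ produces the factor $\sum_{j \leq n^{1/5}} |\SAB_{2j-2}|\,\mu_c^{-2j}$ in the denominator, and this tends to infinity by the classical divergence of the bridge generating function at $\mu_c^{-1}$. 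That divergence is the correct replacement for ``linearly many renewals''---it is what is actually available, and it suffices.

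A secondary issue: your two-edge insertion as described does not quite work. If you lift the tail by $e_1$ and then try to reconnect with one vertical and one horizontal step, the displacement does not match (you are trying to bridge a gap of $2e_1$ with two edges, one of which is horizontal). This is fixable with a different local surgery, but given the main gap above it is moot.
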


\begin{proof}
  Fix $\eps > 0$ and $M \in \NN$ and let us show that, for $n$ large enough, $\sab_n \left( |\ren_\Gamma|< M \right) < \eps$. 

  Let $\SAB^M_n$ be the set of bridges of length $n$ with strictly fewer than $M$ z-renewal times. 
  If $|\SAB^M_n| <  e^{-2c_{HW}\sqrt n}\mu_c^n$, we may use \eqref{eq.hw} to deduce that 
  $\sab_n \left( |\ren_\Gamma|< M \right) < e^{-c_{HW}\sqrt n} \leq \eps$, provided $n$ is large enough. 
  
  From now on, assume 
  \begin{equation}\label{eq: not too small}
    |\SAB^M_n|\ge e^{-2c_{HW} \sqrt n}\mu_c^n.
  \end{equation}
  Let $k =  \lfloor n^{1/5} \rfloor$, and define the map 
  $$ \Phi:\bigcup_{j=1}^{k}\SAB_{n-2j}^M\times\SAB_{2j-2}\longrightarrow\SAB_{n}$$ 
  that maps $(\ga_1,\ga_2)$ to the concatenation of $\ga_1$, the walk whose consecutive edges are $e_1$ and $e_2$, and $\ga_2$. 
  Each $\ga  \in \SAB_{n}$ has at most $M$ pre-images under $\Phi$, 
  because, if $\Phi$ maps $(\gamma_1,\gamma_2)$ to $\gamma$, 
  then the endpoint of the copy of $\gamma_1$ in $\gamma$ is one of the first $M$ 
  $z$-renewal points of $\gamma$. 
  We deduce that
  $$\sum_{j=1}^k|\SAB_{n-2j}^M|\times|\SAB_{2j-2}|\le M |\SAB_{n}|.$$
  Lemma~\ref{lem:stability} applied with $A = \cup_{k\geq 0} \SAB_{k}^M$, along with \eqref{eq: not too small},
  provides the existence of a constant $C > 0$ for which  
  $|\SAB_{n-2j}^M|\ge C \mu_c^{-2j}|\SAB^M_n|$ for all $0 \le j \le k$. 
  Thus,
  $$\sab_n \left( |\ren_\Gamma|< M  \right) = \frac{|\SAB_n^M|}{|\SAB_n|}\le \frac{M}{C \sum_{j=1}^k|\SAB_{2j-2}|\mu_c^{-2j}}.$$
  However, it is a classical fact that the generating function for bridges diverges at criticality, 
  i.e. $\sum_{j=1}^\infty|\SAB_{2j}| \mu_c^{-2j}  = \infty$. This is shown for instance in \cite[Cor. 3.1.8]{MS93}.
  As $n$ tends to infinity, so does $k$, and therefore,  for $n$ sufficiently large, $\sab_n \left( |\ren_\Gamma|< M  \right) < \eps$. 
\end{proof}

\begin{proposition}\label{prop:Mvariation}
  For any $\ep>0$, there exists $M>0$ such that, for any $n,h>0$ and $x\in \mathbb Z^d$,  
  $$
  \sab_n \left( \Gamma_n=x \, \Big\vert \,  | \ren_\Gamma|\ge M,\langle \Gamma_n|e_1\rangle=h \right) \le \ep .
  $$
\end{proposition}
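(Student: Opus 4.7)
The plan is to exploit the rigid structure at each $z$-renewal time. If $k$ is a $z$-renewal time of a bridge $\gamma$, the edge $\gamma_{k+1}\gamma_{k+2}$ is horizontal (direction in $\{\pm e_2,\dots,\pm e_d\}$), and the $z$-renewal inequalities force the strict height separation between $\gamma[0,k+1]$ and $\gamma[k+2,n]$. Hence this horizontal edge may be replaced by any other horizontal edge emanating from $\gamma_{k+1}$, with the subsequent portion of the walk translated accordingly, producing a new bridge with the same set of $z$-renewal times and the same height profile. Applied simultaneously at all $z$-renewals, this ``flipping'' operation produces many bridges with the same length and endpoint height but different endpoint horizontal positions, which is exactly what we need.

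Fix $L\ge M$, $h>0$ and $x\in\Z^d$ with $\langle x|e_1\rangle=h$, and write
\begin{align*}
B_{L,h,x} := \{\gamma\in\SAB_n : |\ren_\gamma|=L,\ \gamma_n=x\},
\qquad B_{L,h} := \bigcup_{x'} B_{L,h,x'}.
\end{align*}
Declare $\gamma\sim\gamma'$ when $\gamma'$ is obtained from $\gamma$ by replacing the horizontal direction $v_j$ at each $z$-renewal by some $v'_j\in\{\pm e_2,\dots,\pm e_d\}$ (and translating subsequent pieces accordingly). The equivalence class $[\gamma]$ lies in $B_{L,h}$ and has exactly $(2(d-1))^L$ elements. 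Within $[\gamma]$ the endpoint is $\gamma_n+\sum_{j=1}^L(v'_j-v_j)$; projecting $\sum_j v'_j$ onto $e_2$ shows that this vector takes at least $L+1$ distinct values, so by the pigeonhole principle $|[\gamma]\cap B_{L,h,x}|\le (2(d-1))^L/(L+1)$. Applying the multi-valued map principle (Lemma~\ref{prop:multivalmap}) to $\Phi:B_{L,h,x}\to\mathcal{P}(B_{L,h})$ with $\Phi(\gamma):=[\gamma]$ yields $|B_{L,h,x}|\le |B_{L,h}|/(L+1)$. Summing over $L\ge M$ gives
\begin{align*}
\sab_n\bigl(\Gamma_n=x\ \big|\ |\ren_\Gamma|\ge M,\ \langle\Gamma_n|e_1\rangle=h\bigr)\le \frac{1}{M+1},
\end{align*}
so any $M\ge \ep^{-1}-1$ works.

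The main obstacle is verifying that the flipped walks are genuinely self-avoiding bridges with the same $z$-renewal structure; this is precisely the reason for working with $z$-renewals rather than plain renewals. Heights are preserved under the flip, so the bridge property and the location of $z$-renewals are automatic, and the first piece is never translated so the flipped walk still starts at $0$. Self-avoidance reduces to showing that horizontal translations of the pieces $\gamma[k_j+2,k_{j+1}+1]$ between consecutive $z$-renewals do not create collisions. Non-adjacent pieces are separated vertically and cannot collide. Adjacent pieces share only the boundary height $\langle\gamma_{k_j+1}|e_1\rangle$, at which the $z$-renewal inequalities guarantee that the only vertex in piece $j-1$ is $\gamma_{k_j+1}$ and the only vertex in piece $j$ is $\gamma_{k_j+2}$; after flipping these two vertices remain distinct and connected by the new (nonzero) horizontal edge $v'_j$, and the argument is complete.
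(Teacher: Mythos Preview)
Your approach is the same as the paper's---flip the horizontal edge following each $z$-renewal and observe that length, height profile, bridge property, and $z$-renewal structure are preserved---and your verification of self-avoidance is careful and correct. However, one step is wrong.

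You write: ``projecting $\sum_j v'_j$ onto $e_2$ shows that this vector takes at least $L+1$ distinct values, so by the pigeonhole principle $|[\gamma]\cap B_{L,h,x}|\le (2(d-1))^L/(L+1)$.'' This is a misapplication of pigeonhole: from $L+1$ nonempty fibres of total size $(2(d-1))^L$ you may conclude that the \emph{smallest} fibre has size at most $(2(d-1))^L/(L+1)$, not that every fibre does. In fact the bound fails already for $d=2$: there the choices are $v'_j\in\{\pm e_2\}$, the fibre over the endpoint corresponding to $\sum_j v'_j=0$ (for $L$ even) has size $\binom{L}{L/2}$, and $\binom{L}{L/2}\ge 2^L/(L+1)$, with asymptotic ratio of order $\sqrt{L}$. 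Consequently the bound $1/(M+1)$ on the conditional probability is not established.

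The fix is immediate and costs nothing for the proposition as stated. Within an equivalence class, the endpoint equals a fixed point plus $\sum_{j=1}^L v'_j$ with the $v'_j$ independent and uniform on $\{\pm e_2,\dots,\pm e_d\}$. Hence
\[
\frac{|[\gamma]\cap B_{L,h,x}|}{(2(d-1))^L}\ \le\ \max_{w\in\Z^{d-1}}\ \PP\Big(\sum_{j=1}^L v'_j=w\Big)\ =:\ p_L,
\]
and $p_L\to 0$ as $L\to\infty$ since the step distribution is non-degenerate (e.g.\ project onto $e_2$ and use that $\max_k\binom{L}{k}/2^L\to 0$, or invoke the local CLT to get $p_L=O(L^{-(d-1)/2})$). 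Summing over $L\ge M$ then yields the conditional probability bound $\sup_{L\ge M}p_L$, which can be made $\le\ep$ by choosing $M$ large. This is exactly the ``routine exercise'' the paper alludes to.
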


\begin{proof}[Proof]
  If $k$ is a $z$-renewal time of a walk $\gamma$, and if the edge $(\gamma_{k+1},\gamma_{k+2})$ is modified to take any one of the  $2d-2$ values $\pm e_2$, $\pm e_3$, $\ldots$, the outcome remains self-avoiding and shares $\gamma$'s height. In light of this, the proof is a routine exercise.
\end{proof}

\subsection{The case of half-space walks}

Next, we prove delocalization for walks confined to a half-space.
The set of {\em half-space walks} of length $n$ is
\begin{align*} 
  \sahswset_n &= \Big\{ \gamma \in \SAW_n: \langle\gamma_k |e_1\rangle > 0 \text{ for all } 1 \leq k \le n \Big\}.
\end{align*} 
Let $\sahsw_n$ denote the uniform measure on $\sahswset_n$.

\begin{proposition}\label{prop:half-space}
  We have that $\lim_n \sup_{x \in \ZZ^d} \sahsw_n(\gamma_n = x)  = 0$. 
\end{proposition}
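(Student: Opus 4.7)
Since $\{\Gamma_n = x\} \subseteq \{\pi_1(\Gamma_n) = \pi_1(x)\}$, it suffices to establish the uniform horizontal delocalization $\lim_{n \to \infty} \sup_{v \in \mathbb{H}} \sahsw_n(\pi_1(\Gamma_n) = v) = 0$; the proposition then follows by taking $v = \pi_1(x)$.

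The plan is to reduce this to the bridge case (Proposition~\ref{lemma:SAB_to_x}) via the unfolding operation $\unf$ of Section~\ref{sec:unfold} applied at the hanging time. For $\gamma \in \sahswset_n$, the walk $b = \unf(\gamma)$ lies in $\sabset_{n+1}$; since reflecting through the horizontal hyperplane at height $\gamma_\hang \cdot e_1$ preserves horizontal coordinates, one has $\pi_1(b_{n+1}) = \pi_1(\gamma_n)$, so $\unf$ sends the event $\{\pi_1(\Gamma_n)=v\}$ into the event $\{\pi_1(b_{n+1})=v\}$. A refolding of a bridge $b$ at step $k$ produces a valid pre-image only if $b_k$ is the lex-maximum of $b[0,k]$ and the tail $b[k+1,n+1]$ stays strictly above the height $b_k \cdot e_1$; in other words, $k$ must be a ``last visit'' to a current-maximum height of $b$. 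The number of such steps is bounded by the final height $b_{n+1}\cdot e_1 \le n+1$, and we obtain
\begin{align*}
|\{\gamma \in \sahswset_n : \pi_1(\gamma_n) = v\}| \;\leq\; (n+1)\,|\{b \in \sabset_{n+1} : \pi_1(b_{n+1}) = v\}| \;\leq\; (n+1)\,\eps_n\,|\sabset_{n+1}|,
\end{align*}
with $\eps_n \to 0$ uniformly in $v$ by Proposition~\ref{lemma:SAB_to_x}.

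The main obstacle is that dividing by $|\sahswset_n|$ and using the Hammersley--Welsh bound \eqref{eq.hw} gives a prefactor of order $n\,e^{c_{HW}\sqrt{n}}$, which the qualitative $o(1)$ from Proposition~\ref{lemma:SAB_to_x} is too weak to dominate on its own. To close this gap, I would refine the estimate in two stages. First, a half-space analogue of Proposition~\ref{prop:M} (proved by the same multi-valued map argument, combined with Lemma~\ref{lem:stability} applied to the shells of half-space walks with atypically short initial bridge) shows that $\hang$ is macroscopic with probability tending to one; on this typical event, the admissible refolding steps must occur at a macroscopic position of $b$, which both shrinks the pre-image count and constrains $b$ further. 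Second, applying Proposition~\ref{lemma:SAB_to_x} to the sub-bridge $\gamma[0,\hang]$ (which ends at $\gamma_\hang$ and must match the starting point of the reversed tail $\gamma[\hang,n]$) produces an additional $o(1)$ factor that is essentially independent of the first application. The product of these two vanishing factors dominates the Hammersley--Welsh and polynomial prefactors and yields $\sahsw_n(\pi_1(\Gamma_n)=v) = o(1)$ uniformly in $v$, completing the proof.
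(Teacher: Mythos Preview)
Your first attempt is correctly diagnosed as insufficient, and your proposed two-stage fix does not close the gap. The second stage is the crucial one, and as stated it is not a proof: you say that applying Proposition~\ref{lemma:SAB_to_x} to the sub-bridge $\gamma[0,\hang]$ ``produces an additional $o(1)$ factor that is essentially independent of the first application'', but you never specify a map or a counting argument that would make two independent $o(1)$ factors appear. Conditioning on $\gamma^2$ and noting that $\gamma^1$ is then a bridge ending at a determined point is the right germ of an idea, but to turn it into an estimate you must control the ratio of the number of such constrained $\gamma^1$ to something you can compare to $|\sahswset_n|$. Your write-up never does this, and the vague claim of ``essential independence'' is exactly where the work lies.

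The paper's argument avoids the cardinality mismatch altogether by mapping back into $\sahswset_{n+1}$ rather than into $\sabset_{n+1}$. Concretely: unfold $\gamma$, let $\re$ be the \emph{last renewal time} of $\unf(\gamma)$ (so $\re \geq \hang$ and $\re$ is recoverable from the image), and define $\Phi(\gamma)$ to be all half-space walks obtained by replacing the initial bridge $\unf(\gamma)[0,\re]$ by an arbitrary bridge of the same length. Then $|\Phi(\gamma)| = |\sabset_\re|$, while any $b \in \Phi(\gamma)$ determines $\re$ and the tail, so $\gamma[0,\re]$ is a bridge of length $\re$ with $\pi_1$-endpoint fixed by $x$ and $b$; hence $|\Phi^{-1}(b)|/|\Phi(\gamma)| \leq \sab_\re(\pi_1(\Gamma_\re) = v)$ for some $v$. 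After disposing of the exponentially unlikely event $\{\hang < H\}$, one has $\re \geq H$ and the contracting factor is at most $\eps$. Since the target is $\sahswset_{n+1}$ and $|\sahswset_{n+1}| \leq 2d\,|\sahswset_n|$, no Hammersley--Welsh loss ever enters. This single resampling step is what your sketch is missing.
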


\begin{proof}
  Let $\ep>0$ and note that  Proposition~\ref{lemma:SAB_to_x} ensures the existence of $H \in \N$ such that
  \begin{equation}\label{eq:bound}
    \sup_{k\ge H,v\in\mathbb H}\sab_k\big( \pi_1(\Gamma_k)=v \big) \le \ep.
  \end{equation}
 
  First note that $\sahsw_n ( \langle \gamma_\hang|e_1\rangle\le H)$ 
  decays exponentially as $n \to \infty$.
  Indeed, the pattern theorem \cite[Thm. 1]{kestenone} implies that, 
  with probability exponentially close to one, a walk in $\sahswset_n$ contains $H+1$ consecutive edges $e_1$. 
  We may therefore restrict our attention to walks going above height~$H$.

  \begin{figure}
    \begin{center}
      \cpsfrag{hang}{$\ga_\hang$}
      \cpsfrag{ren}{$\unf(\gamma)_\re$}
      \cpsfrag{bren}{$b_\re$}
      \cpsfrag{bn}{$b_{n+1}$}
      \cpsfrag{x}{$x$}
      \cpsfrag{O}{$O$}
      \cpsfrag{eone}{$e_1$}
      \includegraphics[width=0.8\textwidth]{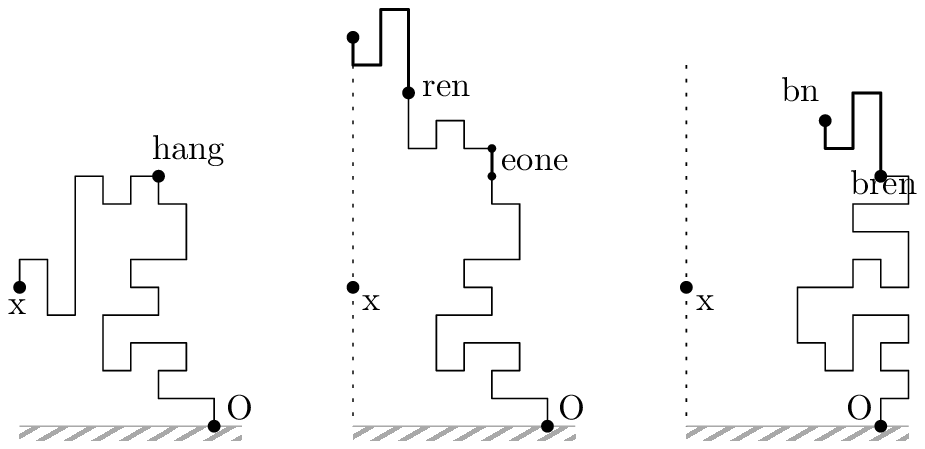}
    \end{center}
    \caption{\emph{Left:} A walk $\gamma\in \sahswset_n$ ending at $x$.
      \emph{Middle:} The unfolding of $\gamma$, $\unf(\ga)$, and its last renewal point $\unf(\gamma)_\re$.
      Notice the bold 
      edge $e_1$ added between $\ga^1$ and the reflection of $\ga^2$.
      \emph{Right:} A walk $b \in \Phi(\gamma)$. 
      Its last renewal point is $b_\re$;
      $b[0,\re]$ is a bridge and 
      $b[\re,n+1]$ is equal up to translation to  $\unf(\gamma)[\re,n+1]$ (bold).
      The choice of $b[0,\re]$ may be such that $\pi_1(b_{n+1}) \neq \pi_1(x)$.}
    \label{fig:half_sp}
  \end{figure}

  Let $x \in \ZZ^d$ and define a multi-valued map $\Phi : \{\gamma \in\sahswset_n:\gamma_n=x\} \to \sahswset_{n + 1}$ as follows.  
  Let $\re$ be the last renewal time of $\unf(\gamma)$ 
  (recall the definition of $\unf$ from Section~\ref{sec:unfold}) 
  and let $\Phi(\gamma)$ be the set of all half-space walks which can be represented as the concatenation 
  of some bridge of length $\re$ and~$\unf(\gamma)[\re,n]$.
  See Figure \ref{fig:half_sp}.
  Note that $\hang$ is a renewal time for  $\unf(\gamma)$ (this is due to the edge $e_1$ added between the two walks),
  hence $\re$ is well defined and $\re \geq \hang$. 
  For any $\gamma\in\sahswset_n$, $|\Phi(\gamma)| =|\sabset_{\re}|$. 
 
  In the other direction, let $b \in \sahswset_{n+1}$ and $\ga \in \Phi^{-1}(b)$. 
  The time $\re$ of~$\ga$ can be determined, since it is the last renewal time of $b$. 
  As such, $\ga[\re, n]$ is determined by $b$. 
  Thus, $\ga[0,\re]$ is a bridge with 
  $$\pi_1(\ga_\re)=\pi_1(x+b_\re-b_{n+1}).$$ 
  Furthermore, $\unf$ is an injective function from $\{\chi \in \sahswset_n:\chi_n=x\}$ to $\sahswset_{n+1}$
  (indeed, the vertical coordinate of the hanging point of the original walk can be determined from knowing that the original walk ended at $x$). 
  In conclusion, 
  $$
  |\Phi^{-1}(b)| \leq \Big| \Big\{\chi \in \sabset_{\re} : \pi_1(\chi_\re)=\pi_1( x + b_\re-b_{n+1})   \Big\} \Big|.
  $$
  As mentioned before, we may suppose $\langle \ga_{\hang} | e_1 \rangle \ge H$, and therefore $\re \ge H$. 
  The set $\Phi(\gamma)$ is independent of $\gamma\in \Phi^{-1}(b)$. 
  Thus, for any choice of $b$, the contracting factor of~$\Phi$ appearing in the multi-valued principle satisfies
  $$ \Lambda_\Phi(b) \leq \max_{k\ge H,v\in\mathbb H}\sab_k \big( \pi_1(\Gamma_k)=v \big) \le \ep. $$
  The multi-valued map principle and the trivial inequality $|\sahswset_{n+1}| \leq 2d|\sahswset_n|$ yield
  $\sahsw_n(\gamma_n = x) \leq 2d \eps$, and the proof is complete. 
\end{proof} 
 
\subsection{The case of walks (proof of Theorem \ref{theorem:SAW_to_x})}

Fix $\ep>0$.
Proposition~\ref{prop:Mvariation} yields the existence of $M>0$ such that, for any $n, h \geq 0$, 
\begin{equation}\label{defM} 
\sup_{x \in \ZZ^d} \sab_n \left( \Gamma_n = x \, \Big\vert \,  |\ren_\Gamma| \ge M, \langle \Gamma_n| e_1\rangle=h \right) \le \frac{\ep}{2d}. 
\end{equation}
Fix such a value of $M$.
Recall the notation $(\ga^1, \ga^2) = (\ga[0, \hang], \ga[\hang, n])$. 
We divide the proof in two cases, depending on whether $\Gamma^1$ possesses at least or fewer than $M$ $z$-renewal times. 
The next two lemmas treat the two cases.

\begin{lemma}[Many $z$-renewal times for $\Gamma^1$]\label{lem:many R}
  For  $n$ large enough, 
  $$
  \sup_{x \in \ZZ^d} \saw_n \big( \Gamma_n=x\text{ and } | \ren_{\Gamma^1}| \geq M \big) \le \ep.
  $$
\end{lemma}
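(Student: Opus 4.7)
The plan is to adapt the multi-valued map argument from Proposition~\ref{prop:half-space} and combine it with the bridge delocalization result Proposition~\ref{prop:Mvariation}. The assumption $|\ren_{\Gamma^1}| \geq M$ makes the first leg $\Gamma^1$ bridge-like, so its last $z$-renewal time gives a natural splitting index at which one may swap in a different bridge, then invoke bridge delocalization to control the contracting factor.

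Concretely, given $\Gamma \in \SAW_n$ with $\Gamma_n = x$ and $|\ren_{\Gamma^1}| \geq M$, let $r$ be the last $z$-renewal time of $\Gamma^1$ and set $h := \langle \Gamma_{r+1} | e_1 \rangle$. By the $z$-renewal property, $\Gamma[0,r+1]$ has all intermediate heights strictly below $h$ and ends at height $h$; the vertex $\Gamma_{r+2}$ also lies at height $h$ and every subsequent vertex of $\Gamma^1$ lies strictly above. I would define a multi-valued map $\Phi$ from $\{\Gamma\in\SAW_n : \Gamma_n=x,\ |\ren_{\Gamma^1}|\geq M\}$ into $\SAW_n$ by replacing $\Gamma[0,r+1]$ with any walk $\beta$ of length $r+1$ from $0$ to a point at height $h$, with intermediate heights strictly less than $h$ and at least $M$ $z$-renewal times, then translating $\Gamma[r+1,n]$ so that its initial vertex matches $\beta_{r+1}$. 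The result is self-avoiding at height $h$ because $\beta$'s unique height-$h$ vertex is its endpoint, while the translated tail meets height $h$ only at $\beta_{r+1}$ itself and at its neighbour $\beta_{r+1}+(\Gamma_{r+2}-\Gamma_{r+1})$, a horizontal neighbour not visited by $\beta$.

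For the counting step, observe that $\Phi$ preserves the lex-max vertex $\Gamma_\hang$, so the hanging index $\hang$ of any $\tilde\Gamma\in\Phi(\Gamma)$ coincides with that of $\Gamma$; then $r$ is recovered as the last $z$-renewal time of $\tilde\Gamma[0,\hang]$, and the endpoint constraint $\Gamma_n=x$ together with the translated tail pin down $\Gamma_{r+1}$ uniquely. Hence $|\Phi^{-1}(\tilde\Gamma)|$ is bounded by the number of walks of length $r+1$ from $0$ to the specific point $\Gamma_{r+1}$, ending at a strict top of height $h$ and having at least $M$ $z$-renewal times. The proof of Proposition~\ref{prop:Mvariation} uses only the $2d-2$ horizontal reorientations of the edge $(\gamma_{k+1},\gamma_{k+2})$ at each $z$-renewal time, so it applies equally well to walks of this more general class and bounds the preimage count by $\tfrac{\eps}{2d}\,|\Phi(\Gamma)|$. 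The multi-valued map principle then yields
\[
\big|\{\Gamma \in\SAW_n: \Gamma_n=x,\ |\ren_{\Gamma^1}|\geq M\}\big| \;\leq\; \tfrac{\eps}{2d}\,|\SAW_n|,
\]
which is the desired estimate.

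The main obstacle is verifying self-avoidance between $\beta$ and the $\Gamma^2$-portion of the tail: since $\Gamma^2$ is only constrained to lie weakly below $\langle \Gamma_\hang | e_1\rangle$, it may dip back down to heights $\leq h$ and so potentially collide with $\beta$, which also lives below height $h$. I would handle this by restricting attention to walks whose $\Gamma^2$ remains strictly above height $h$ (equivalently, the last $z$-renewal of $\Gamma^1$ is also a $z$-renewal of the full walk $\Gamma$), and bounding the excluded contribution either by a pattern-theorem argument or by invoking the stability estimate Lemma~\ref{lem:stability} to compare the bad shells with walks of nearby length.
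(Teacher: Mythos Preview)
Your construction has the right spirit but the obstacle you flag at the end is a real gap, and the fixes you propose do not close it. There is no reason for the event ``$\Gamma^2$ stays strictly above the level $h$ of the last $z$-renewal of $\Gamma^1$'' to carry most of the mass of $\{\Gamma_n=x,\ |\ren_{\Gamma^1}|\ge M\}$: the second leg $\Gamma^2$ starts at the hanging point and is only constrained to lie weakly below it, so it may well descend past level $h$ for a positive fraction of walks, and neither the pattern theorem nor Lemma~\ref{lem:stability} gives any control on this. Your multi-valued map simply is not defined on these walks, and the excluded set is not manifestly negligible.

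The paper resolves this by \emph{unfolding} $\Gamma^2$ about the hanging point (adding a single $e_1$-edge and reflecting), so that the image of $\Gamma^2$ lies strictly above $\langle\Gamma_\hang|e_1\rangle$ and cannot collide with anything at or below that level. With that device in hand, the paper also organizes the map differently: it uses the \emph{first} renewal time $\re$ of $\Gamma^1$, keeps $\Gamma[0,\re]$ and the unfolded $\Gamma^2$ fixed, and replaces only the middle piece $\Gamma[\re,\hang]$ by an arbitrary bridge of the same length and $e_1$-span with at least $M$ $z$-renewals. This middle piece is a genuine bridge, so Proposition~\ref{prop:Mvariation} applies verbatim, and the recovery of $\hang$ from the image uses that $\Gamma_n=x$ determines the height of $\Gamma_\hang$ via the unfolding relation. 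Once you add the unfolding step, your variant (replace the initial piece rather than the middle one) could also be made to work, but as written the argument is incomplete.
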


In the case where the first part contains few $z$-renewal times, 
we further demand that the hanging time be smaller than $n/2$. 

\begin{lemma}[Few $z$-renewal times for $\Gamma^1$]\label{lem:few R}
  For  $n$ large enough, 
  $$
  \sup_{x \in \ZZ^d} \saw_n \big( \Gamma_n=x, \hang\le n/2\text{ and }|\ren_{\Gamma^1}| < M \big) \le \ep.
  $$
\end{lemma}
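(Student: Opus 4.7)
The plan is to adapt the proof of Proposition~\ref{prop:half-space}, performing surgery on the second part $\Gamma^2 = \Gamma[\hang, n]$ rather than on the entire walk. Under $\hang \leq n/2$ we have $|\Gamma^2| \geq n/2$, and since $\Gamma_\hang$ is the lex-maximum of $\Gamma$, the walk $\Gamma^2$ lies weakly below $\Gamma_\hang$ in the $e_1$-direction. Reflecting $\Gamma^2$ across the hyperplane through $\Gamma_\hang$ perpendicular to $e_1$, translating $\Gamma_\hang$ to the origin, and prepending a single $e_1$-edge produces, injectively, a half-space walk $\bar U'(\Gamma) \in \sahswset_{n-\hang+1}$ of length at least $n/2 + 1$, whose endpoint $y(x,\Gamma_\hang) := \Rfl_{\Gamma_\hang}(x) - \Gamma_\hang + e_1$ is determined by $x$ and $\Gamma_\hang$ alone.

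The main step is a multi-valued map $\Phi : \Sigma_x \to \SAW_n$, where $\Sigma_x$ denotes the set in the lemma. For $\Gamma \in \Sigma_x$, let $\re$ be the last renewal time of $\bar U'(\Gamma)$; then $\bar U'(\Gamma)[0, \re]$ is a bridge of length $\re$, and the tail $\bar U'(\Gamma)[\re, n-\hang+1]$ has increments strictly above $\bar U'(\Gamma)_\re$. Define $\Phi(\Gamma)$ to be the set of walks $\Gamma^1 \circ \Gamma^{2\prime} \in \SAW_n$ obtained by replacing the bridge $\bar U'(\Gamma)[0,\re]$ by an arbitrary $\chi \in \sabset_\re$, reflecting and translating back, and concatenating with $\Gamma^1$, keeping only those $\chi$ that produce a valid self-avoiding walk with $\Gamma_\hang$ as lex-maximum. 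An image $\Gamma' \in \Phi(\Gamma)$ determines $\Gamma^1$, the common last renewal time $\re = \re(\bar U'(\Gamma'))$, and the tail of $\bar U'(\Gamma)$ up to translation, so that $\Phi^{-1}(\Gamma')$ is parametrised by bridges $\chi^\star \in \sabset_\re$ whose endpoint $\chi^\star_\re$ is pinned to a specific point (fixed by $x$, $\Gamma_\hang$, and the tail-offset); in particular $\pi_1(\chi^\star_\re)$ is prescribed. Proposition~\ref{lemma:SAB_to_x} then bounds $|\Phi^{-1}(\Gamma')|$ by $o(|\sabset_\re|)$; combined with the fact that $|\Phi(\Gamma)|$ depends only on the shared $\Gamma^1$ and tail, the contracting factor $\Lambda_\Phi(\Gamma') = |\Phi^{-1}(\Gamma')|/|\Phi(\Gamma)|$ is $o(1)$ uniformly, provided $|\Phi(\Gamma)|$ is of order $|\sabset_\re|$. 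The multi-valued map principle (Lemma~\ref{prop:multivalmap}) then yields $|\Sigma_x| \leq \eps \cdot c_n$ for $n$ large enough.

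Two technical points remain. First, to apply Proposition~\ref{lemma:SAB_to_x} we need $\re$ to exceed a threshold $H$; this is enforced by discarding, via a pattern-theorem estimate in the vein of~\eqref{eq.patterns}, the exponentially rare $\Gamma$ whose reflected tail does not rise by more than $H$ above $\Gamma_\hang$. Second, and more delicate, one must verify that $|\Phi(\Gamma)|$ is a positive fraction of $|\sabset_\re|$, i.e.\ that a constant fraction of bridges $\chi$ yield compatible walks. Avoidance of $\Gamma^1$ is automatic, since after reflection the image of $\chi$ lives strictly above $\Gamma_\hang$'s height in $e_1$ while $\Gamma^1$ sits at or below that height; and the lex-maximum constraint at $\Gamma_\hang$ restricts $\chi$ only at its height-zero points via a tangential lex-sign condition, which excludes only a small fraction of bridges. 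The hypothesis $|\ren_{\Gamma^1}| < M$ does not enter the argument directly; it is simply the complementary condition to Lemma~\ref{lem:many R} in the case split used to prove Theorem~\ref{theorem:SAW_to_x}.
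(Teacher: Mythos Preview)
Your proposal has a genuine gap at the avoidance step. After you replace the bridge part of $\bar U'(\Gamma)$ by an arbitrary $\chi \in \sabset_\re$ and then ``reflect and translate back'' to form $\Gamma^{2\prime}$, this new piece lives (except at its starting point $\Gamma_\hang$) at or \emph{below} the height $\langle \Gamma_\hang | e_1\rangle$, not above as you write; indeed, that is the whole point of reflecting back, so that $\Gamma_\hang$ can remain the \lex\ point. But $\Gamma^1$ also lives at or below that height. There is thus no separating hyperplane, and nothing prevents $\Gamma^{2\prime}$ from hitting $\Gamma^1$. Your sentence ``after reflection the image of $\chi$ lives strictly above $\Gamma_\hang$'s height in $e_1$'' confuses the forward reflection (producing $\bar U'$) with the backward one. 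Consequently you have no lower bound on $|\Phi(\Gamma)|$ in terms of $|\sabset_\re|$: if $\Gamma^1$ fills a large portion of the half-space below $\Gamma_\hang$, the fraction of bridges $\chi$ giving a self-avoiding concatenation may be arbitrarily small, and the contracting-factor estimate collapses.

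The paper's argument sidesteps this by sending the new material \emph{above} $\gamma_\hang$: one concatenates $\gamma^1$, then the two edges $e_1,e_2$, then an arbitrary half-space walk of length $n-\hang+1$, mapping into $\SAW_{n+3}$. Avoidance is now genuinely automatic and $|\Phi(\gamma)| = |\sahswset_{n-\hang+1}|$ exactly; Proposition~\ref{prop:half-space} (rather than Proposition~\ref{lemma:SAB_to_x}) then controls the pre-images. The price is that the \lex\ point moves, so $\gamma^1$ can no longer be read off from the image via $\hang$. This is precisely where $|\ren_{\gamma^1}| < M$ enters: the inserted $(e_1,e_2)$ makes $\gamma_\hang$ a $z$-renewal point of the image $b$, and after fixing $|\ren_{\gamma^1}| = j$ it is the $(j+1)$-st such point of $b$, which pins down $\gamma^1$. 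Summing over $j<M$ gives the lemma. The hypothesis you dismiss as ``simply the complementary condition'' is in fact the device that makes the backward map work once the forward map has been made honest.
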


Theorem~\ref{theorem:SAW_to_x} follows from the two lemmas because, as we now see, 
they imply that
$$\sup_{x \in \ZZ^d}\saw_n (\Gamma_n=x) \le 4\ep.$$
Indeed, the lemmas clearly yield
$$\sup_{x \in \ZZ^d} \saw_n \big( \hang\le n/2\text{ and }\Gamma_n=x \big) \le 2\ep,$$
for $n$ large enough. The counterpart inequality with $\hang \ge n/2$ may be obtained by reversing the walk's orientation 
(and translating it to start at the origin). 

Thus the proof of Theorem~\ref{theorem:SAW_to_x} is reduced to demonstrating the two lemmas.

\begin{proof}[Proof of Lemma~\ref{lem:many R}]  
  For $n \in \NN$, let $E_M(x)$ be the set of walks  $\ga \in \sawset_n$ with $\ga_n=x$ and $|\ren_{\ga^1}| \geq M$.
  For such walks, let $\re$ be the smallest renewal time of $\ga^1$. 
  Split~$\ga^1$ in two parts   $\gamma^{11} = \g[0,\re]$, $\gamma^{12} = \ga[\re,\hang]$ and set $k=\hang - \re$.

  \begin{figure}
    \begin{center}
      \cpsfrag{hang}{$\ga_\hang$}
      \cpsfrag{ren}{$\ga_\re$}
      \cpsfrag{bhang}{$b_{\hang(\ga)}$}
      \cpsfrag{bren}{$b_\re$}
      \cpsfrag{bn}{$b_{n+1}$}
      \cpsfrag{target}{$x$}
      \cpsfrag{o}{$O$}
      \includegraphics[width=0.7\textwidth]{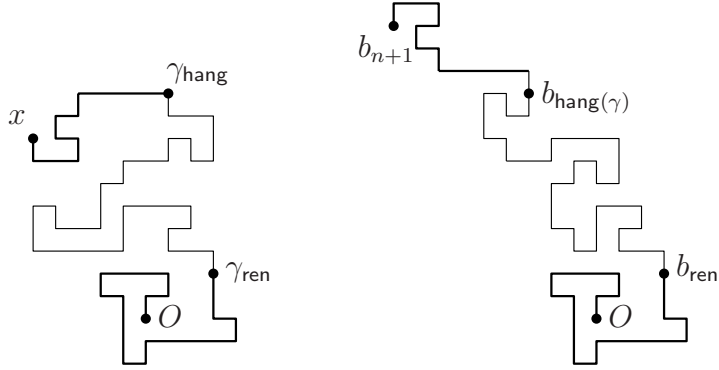}
    \end{center}
    \caption{\emph{Left:} A walk $\gamma\in E_M(x)$.
      \emph{Right:} A walk $b \in \Phi(\ga)$. 
      The first and last parts of $b$ (bold) 
      are the same as those of $\ga$ (up to translation and reflection).
      Its middle part is a bridge with appropriate length and $e_1$-displacement.}
    \label{fig:many_R}
  \end{figure}
   
  We define a multi-valued map $\Phi : E_M(x) \to \SAW_{n+1}$ under which
  $\gamma \in E_M(x)$ is unfolded about $\hang$ and the sub-bridge $\gamma^{12}$ 
  is substituted by any bridge sharing $\gamma^{12}$'s length and $e_1$-displacement 
  (but not necessarily its displacement in other directions) and having at least~$M$ $z$-renewal points. 
  More precisely, for $\gamma \in E_M(x)$, $\Phi(\gamma)$ is the set of walks $b \in \sawset_{n+1}$ with the properties that
  \begin{itemize*}
  \item $b[0,\re]$ is equal to $\gamma^{11}$;
  \item $b[\re,\hang]$ is the translate of a bridge of length $k$ and $e_1$-displacement 
    $$
    \langle \gamma_\hang-\g_\re|e_1\rangle , 
    $$ 
    and which has at least $M$ $z$-renewal times;
  \item and $b[\hang, n+1]$ is equal to $\unf(\gamma^2)$ (up to translation).
  \end{itemize*}
  By construction such walks are indeed self-avoiding, so that $\Phi(\g)$ is well-defined. 
  See Figure \ref{fig:many_R}. 

  Let us now estimate the contracting factor $\Lambda_\Phi$ of $\Phi$. 
  For $\gamma \in E_M(x)$, 
  \begin{equation}\label{forward1}
    |\Phi(\gamma)| = \Big| \Big\{ \chi \in \sabset_k:|\ren_\chi| \ge M  \text{ and } 
    \langle \chi_k | e_1 \rangle = \langle \ga_\hang-\ga_\re |e_1 \rangle \Big\} \Big|.
  \end{equation}

  For the number of pre-images, consider $\gamma \in  \Phi^{-1}(b)$ for some $b\in \sawset_{n+1}$ 
  (note that $\Phi^{-1}(b)$ could be empty, in which case the conclusion is trivial).
  Since $\gamma$ ends at $x$ 
  and the $e_1$-displacement of the bridge which replaces $\gamma^{12}$ in $b$ is the same as that  of $\gamma^{12}$, 
  the $e_1$-coordinate of the hanging point of $\gamma$ is determined by $b$. Namely,
  \begin{align*}
    \langle\gamma_\hang| e_1\rangle 
    = \langle b_{\hang(\ga)} | e_1 \rangle
    = \frac{\langle x| e_1\rangle + \langle b_{n+1}| e_1\rangle - 1}{2} .
  \end{align*}
  But $\hang$ is a renewal time for $b$, hence the above determines $\hang$.
  It follows that $\gamma^2$ is also determined by $b$ (including its positioning which is given by the fact that $\ga_n = x$). 
  Moreover, since $\re$ is the first renewal time of $\ga$, it is also the first renewal time of $b[0, \hang]$.
  Thus $b$ determines $\ga^{11}$ as well. 
  Finally, $\ga^{12}$ is a bridge with at least $M$ $z$-renewals, between the determined points $\ga_\re$ and $\ga_\hang$.  
  It follows that 
  \begin{equation}\label{back1}
    |\Phi^{-1}(b)| \leq \Big| \Big\{ \chi \in\sabset_k: | \ren_\chi |\ge M \text{ and } \chi_k = \ga_\hang - \ga_\re \Big\} \Big| .
  \end{equation}
  Since $\gamma^{11}$ and $\gamma^2$ are determined by $b$, 
  any $\gamma\in \Phi^{-1}(b)$ has the same number of images under $\Phi$.  
  Equations \eqref{forward1}, \eqref{back1} and the choice of $M$ (see \eqref{defM}) imply
  that $\Lambda_\Phi(b)$ is bounded by $\frac{\ep}{2d}$ uniformly in $\gamma$, 
  which immediately yields
  $$
  |E_M(x)|\le \frac{\varepsilon}{2d}|\sawset_{n+1}|\le \varepsilon |\sawset_n|.
  $$
\end{proof}

We finish with the easier proof of Lemma~\ref{lem:few R}.
\begin{proof}[Proof of Lemma~\ref{lem:few R}] 
  Let $F_j(x)$ be the set of walks $\gamma \in \SAW_n$ such that $\gamma_n=x$, $\hang\le n/2$ and $|\ren_{\gamma^1}|=j$.
  
  We construct once again a multi-valued map $\Phi$, this time from $F_j(x)$ to~$\SAW_{n+3}$. 
  For $\ga\in F_j(x)$, $\Phi(\ga)$ comprises the walks formed by 
  concatenating~$\gamma^1$,  
  the walk whose consecutive edges are $e_1$ and  $e_2$, and any half-space walk of length $n-\hang + 1$. 
  See Figure \ref{fig:few_R}.

  The number of images through $\Phi$ satisfies $|\Phi(\gamma)|=|\sahswset_{n - \hang + 1}|$. 
  To determine the number of pre-images, 
  note that, if $b \in \Phi(\ga)$, 
  then $b_{\hang(\ga)}$ is the $(j+1)$-st $z$-renewal point of $b$.   
  Also, $\gamma^2$ is contained in the half-space
  $\{y \in \ZZ^d:\langle y|e_1\rangle \leq \langle\gamma_\hang|e_1\rangle \}$ 
  and ends at the point $x$. 
  Such walks can easily be transformed into half-space walks of length $n - \hang + 1$ 
  by reflecting them and adding an edge $e_1$ at the beginning. 
  Note that the endpoint of such a walk is then determined by $\gamma^1$ and $x$.
  Using Proposition~\ref{prop:half-space}, we find that, for any $b \in \SAW_{n+3}$, the contracting factor of $\Phi$ satisfies
  $$
  \Lambda_\Phi(b)\le \sup_{\substack{k\ge n/2 + 1 \\ z\in\ZZ^d}}\sahsw_k \big( \Gamma_k = z \big) \le \frac{\ep}{(2d)^3M},
  $$
  provided that $n$ is large enough.
  By the multi-valued map principle and $|SAW_{n+3}| \le (2d)^3|SAW_{n}|$, 
  we obtain that $\saw_n\big(F_j(x)\big) \le \ep/M$. 
  By taking the union of the $F_j(x)$ over $j < M$, we obtain Lemma~\ref{lem:few R}.
 \end{proof}

  \begin{figure}
    \begin{center}
      \cpsfrag{hang}{$\ga_\hang$}
      \cpsfrag{ren}{$\ga_\re$}
      \cpsfrag{bhang}{$b_{\hang(\ga)}$}
      \cpsfrag{bren}{$b_\re$}
      \cpsfrag{bn}{$b_{n+3}$}
      \cpsfrag{target}{$x$}
      \cpsfrag{o}{$O$}
      \includegraphics[width=0.5\textwidth]{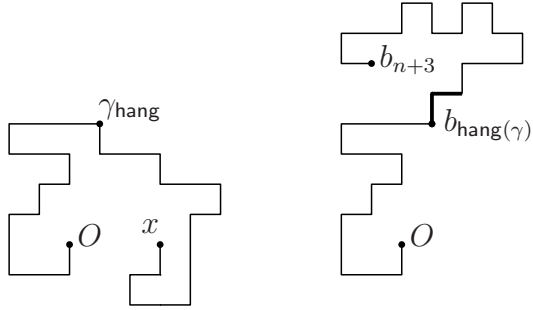}
    \end{center}
    \caption{\emph{Left:} A self-avoiding walk $\gamma\in F_j$.
      \emph{Right:} A walk $b \in \Phi(\ga)$. 
      The point $b_{\hang(\ga)}$ is the $(j+1)$-st z-renewal point of $b$; 
      it is followed by the edges $e_1$, $e_2$ (in bold), 
      then by a half-space walk of length $n-\hang + 1$.}
    \label{fig:few_R}
  \end{figure}

\section{Quantitative decay for the probability of ending at $x$}\label{sec:quantitative}

We say that a walk $\ga = \ga[0,n]$ \emph{closes} if $\g_0$ and $\g_n$ are neighbors; a closing walk is one that closes. 
Theorem \ref{t.nonclosing} claims that for any $\eps >0$ and $n$ large enough,
$\saw_n \big(  \Gamma \text{ closes} \big) \leq n^{-1/4 + \eps}.$

 
A little notation is in order as we prepare to prove Theorem~\ref{t.nonclosing}.
\begin{definition}
  Two closing walks are said to be equivalent if the sequence of vertices visited by one is a cyclic shift of this sequence for the other. 
  A (self-avoiding) polygon is an equivalence class for this equivalence relation. 
  The length of a polygon is equal to the length of any member closing walk plus one. 
  For $n \in \N$, let $\SAP_n$ be the set of polygons of length $n+1$. 
\end{definition}

The following trivial lemma will play an essential role.
\begin{lemma}\label{lem.polyinv}[Polygonal invariance]
  For $n \in \N$,  let $\chi$ and $\chi'$ be two equivalent length-$n$ closing walks. Then
  $$
  \PSAW_n\big(\Ga \, \textrm{is a translate of $\chi$} \big) = 
  \PSAW_n\big(\Ga   \, \textrm{is a translate of $\chi'$} \big).
  $$
\end{lemma}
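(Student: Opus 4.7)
The lemma is essentially tautological once the definitions are unpacked, and I expect its proof to take only a few lines. The plan is as follows. Consider any length-$n$ closing walk $\chi$, viewed as a map $\{0,\ldots,n\}\to\Z^d$ (not necessarily starting at the origin). Its translates are the walks $\chi+v$ indexed by $v\in\Z^d$, and among these exactly one belongs to $\sawset_n$, namely $\chi-\chi_0$: the requirement that the initial vertex be $0$ forces $v=-\chi_0$. By uniformity of $\PSAW_n$ on the $c_n$-element set $\sawset_n$, this immediately yields
\[
\PSAW_n\bigl(\Ga \text{ is a translate of } \chi\bigr) \;=\; \frac{1}{c_n}.
\]
Applying the same identity with $\chi'$ in place of $\chi$ gives the equality and completes the argument.

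I would note that the equivalence (cyclic-shift) hypothesis plays no role in the computation itself: the right-hand side $1/c_n$ is the same for any fixed length-$n$ walk, closing or not. The hypothesis is interpretive, ensuring that the common value $1/c_n$ can legitimately be regarded as a function on the polygon in $\SAP_n$ containing $\chi$, which is how the lemma will be deployed later (summing the equal contributions of the $n+1$ cyclic shifts of a given polygon, so that polygon-level quantities can be related to walk-level probabilities). There is no substantive obstacle in the proof.
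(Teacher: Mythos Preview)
Your argument is correct and matches the paper's treatment: the paper does not give a proof at all, calling the lemma ``trivial'' and moving on, and your computation (each side equals $1/c_n$ since exactly one translate of a length-$n$ walk lies in $\sawset_n$) is precisely the obvious reason. Your remark that the cyclic-shift hypothesis is not used in the computation itself, but only to interpret the common value as a polygon-level quantity, is also accurate and reflects how the lemma is applied later (e.g., in the proof of Lemma~\ref{l.solid}).
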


\subsection{An overview of the proof of Theorem \ref{t.nonclosing}}\label{sec.heuristic}

We start by a non-rigorous overview of the proof of Theorem \ref{t.nonclosing}. 
The actual proof is in Subsection \ref{sec.actualproofclosing}.

The proof will proceed by contradiction. 
Suppose that the statement of Theorem \ref{t.nonclosing} is false, and let $n$ be a large integer such that 
$\saw_n \big(  \Gamma \text{ closes} \big) \geq n^{-1/4 + 5\eps}$.
The factor $5\eps$ in the exponent will be used as a margin of error which will decrease at several steps of the proof. 

Fix an index $\lo \in [\frac{n}{4}, \frac{3n}{4}]$ such that 
$\saw_n (  \Gamma \text{ closes} \cond \hang = \lo ) \geq n^{-1/4+4\ep}$. 
(The existence of such an index is proved in Lemma \ref{l.solid} and relies solely on polygonal invariance and the hypothesis that Theorem~\ref{t.nonclosing} is false.)

A walk  ending at its hanging point will be called \emph{good} if, 
when completed by $n - \lo$ steps in such a way that the hanging point is left unchanged,
the resulting walk has probability at least $n^{ - 1/4+3\ep}$ of closing. 
When thinking of walks as being built step by step, 
good walks should be thought of as first parts that leave a good chance for the walk to finally close. 
Since we assume that the walk closes with good probability, 
it is natural to expect that its first part is good with reasonable probability, 
and indeed one may prove (using polygonal invariance once again) that for our choice of $\lo$,
\begin{align*}
  \PSAW_n \left( \Ga^1 \text{ is good} \mcond \Ga \closes , \lexm = \lo \right) \geq n^{ - 1/4+3\ep}.
\end{align*}
Here, the notation $\Gamma^1 = \Gamma[0,\lexm]$ for the walk's first part was introduced in Definition~\ref{d.hang}. 

This estimate can be improved in the following way: one may change the value of the hanging time and prove that for $0 \leq k \leq \sqrt{n}$,
\begin{align}\label{eq.good2approx}
  \PSAW_n \left( \Ga^1 \text{ is good} \mcond \Ga \closes , \lexm = \lo - 2k \right) \geq n^{ - 1/4+2\ep}.
\end{align}
This part of the proof is heavily based on the resampling of patterns described in Lemma~\ref{lem:balanced} and illustrated in Figure~\ref{fig:pattern_redistrib}.

\begin{figure}[htb]
  \begin{center}
    \cpsfrag{hang}{$\ga_\hang$}
    \cpsfrag{galo}{$\ga_{\hang - \lo}$}
    \cpsfrag{end}{$\ga_n$}
    \includegraphics[width=0.95\textwidth]{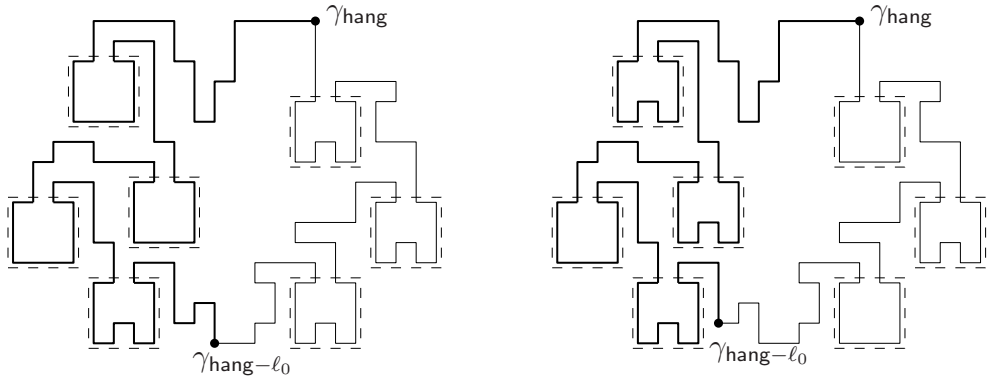}
  \end{center}
  \caption{In a closing walk the $\lo$ steps up to the hanging point form a good walk (bold). 
    By exchanging type I and II patterns between the bold and regular part of the walk in a way that increases by order $n^{1/2}$ the number of type $II$ patterns in the bold part, we may effectively shorten the good part by an amount of this order.}
  \label{fig:pattern_redistrib}
\end{figure}

This study shows that, when considering closing walks as polygons, the $\lo -2k$ steps before the hanging point 
have reasonable probability of forming a good walk, for $k = 0,\dots, \sqrt{n}$.
The correspondence between closing walks and polygons is essential here, 
as is the fact that the hanging point only depends on the polygon, not on the starting point of the closing walk. 

Call a point whose index lies in $[\lexm - \ell_0,\lexm]$ of a closing walk \emph{ticked} if the section of the walk between that point and the hanging point is good.  
Since each of $\sqrt{n}$ points have chance at least $n^{-1/4+2\ep}$ to be ticked,
the expected number of ticked points in a closing walk is at least $n^{1/4+2\ep}$. 
It follows that, with probability greater than $n^{-1/4+\ep}$, a closing walk has more than $n^{1/4+\ep}$ ticked points. 

We now reach the crucial part of the proof. Fix a walk  with $\sfT \geq n^{1/4+\ep}$ ticked points.
By considering the portions of the walk between the ticked points and the hanging point, 
we obtain a family of good walks $\{\chi^i: i = 1, \dots,  \sfT \}$, 
with $\chi^i \subset \chi^{i+1}$. See also Figure \ref{fig:avoided}.

The existence of this family of good walks implies a very strong property of $\chi^\sfT$. 
Indeed, let $\Ga$ be a uniform self-avoiding walk of length $n-\lo$, 
starting at the (common) end-point $z$ of the $\chi^i$s and with hanging point $z$ (in words, it stays in the half-space ``below'' $z$). Note three properties.
First, the events that $\Ga$ ends next to the starting point of $\chi^i$, for $ i =1, \dots,  \sfT$, 
are mutually exclusive (in fact, this is not quite true, as we will discuss in the proof).
Second, the events that $\Ga$ avoids $\chi^i$ are decreasing with $i$ (since $\chi^i$ is a portion of $\chi^{i+1}$). 
Third, note that $\chi^i$ being good means that, when conditioning $\Ga$ to avoid $\chi^i$, 
there is probability at least $n^{-1/4+3\ep}$ that $\chi^i$ ends next to the starting point of $\chi^i$. 
By using these three facts alongside $\sfT \geq n^{1/4+\ep}$, the probability that $\Ga$ avoids $\chi^\sfT$ can be proved to be stretched exponentially small, i.e. at most $e^{-cn^{4\eps}}$ for some small constant $c > 0$. 

Using an unfolding argument, this implies that, 
when conditioning on the~$\ell_0$ first steps of the walk to satisfy $\sfT \geq n^{1/4 + \eps}$ 
and resampling the end of the walk, the newly obtained walk has stretched exponentially small chance of having $\ell_0$ as its hanging time.
It is therefore also stretched exponentially unlikely for a walk to have $n^{1/4 + \eps}$ ticked points in its first $\ell_0$ steps and to have~$\ell_0$ as its hanging time.
 But by assumption, with probability $n^{-1/4 + 5\eps}$, a walk is closing; moreover, by polygonal invariance, it has conditional probability~$1/n$ to have $\ell_0$ as its hanging time; and finally, as we have discussed,  with a further conditional probability  of at least  $n^{-1/4 + \eps}$, its first $\ell_0$ steps have $\sfT \geq n^{1/4 + \eps}$. Thus, the above event is both of probability at most stretched exponential and of probability at least $n^{-3/2 + 6\eps}$, which of course is a contradiction if $n$ is large enough.

\subsection{Proof of Theorem~\ref{t.nonclosing}}\label{sec.actualproofclosing} 

We now elaborate the heuristic argument presented in Subsection~\ref{sec.heuristic}.
As mentioned before, we will proceed by contradiction. 
Suppose that there exists $\eps >0$ such that 
\begin{align}\label{eq.assumption}
  \PSAW_n \big( \Ga \closes \big) \geq n^{ - \frac{1}{4} + 5 \eps}
\end{align}
for an infinite number of values of $n \in \N$. (In particular $\eps \leq 1/20$.)

Fix $n \geq \max \{ 2, 4^{1/\eps} + 1 \}$ for which \eqref{eq.assumption} holds. 
Further bounds on $n$ (depending only on $\eps$) will be imposed. 
The next lemma and the bound $n > 4^{1/\eps}$ permit us to fix an integer $\lo \in [\frac{n}{4}, \frac{3n}{4}]$ such that 
\begin{equation}\label{eq:solidl}
\PSAW_n \big( \Ga \closes \, \big\vert \, \lexm = \lo \big) \geq n^{ - \frac{1}{4} + 4 \eps}.
\end{equation} 

\begin{lemma}\label{l.solid}
  The number of $\ell \in \{ 0,\ldots, n \}$ such that
  $$ 
  \PSAW_n \big( \Ga \closes \, \big\vert \, \lexm = \ell \big) \leq n^{ - \frac{1}{4} + 4 \eps}
  $$
  is at most $2n^{1-\ep}$.
\end{lemma}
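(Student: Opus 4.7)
The plan is to apply polygonal invariance (Lemma~\ref{lem.polyinv}) to show that, among closing walks, the hanging time is distributed uniformly over its $n+1$ possible positions, and then to conclude via an elementary pigeonhole argument using the standing assumption~\eqref{eq.assumption}.

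The key step is the following cyclic-shift bijection on the set of closing walks in $\sawset_n$. Given $\chi = (v_0 = 0, v_1, \dots, v_n)$ closing, form the new walk with consecutive vertices $v_1, v_2, \dots, v_n, v_0$ (which is a valid self-avoiding walk since $v_n$ and $v_0$ are neighbours) and translate by $-v_1$ to obtain $\chi' \in \sawset_n$. The map $\chi \mapsto \chi'$ is a bijection on closing walks that preserves the underlying polygon, so by Lemma~\ref{lem.polyinv} we have $\PSAW_n(\Ga = \chi) = \PSAW_n(\Ga = \chi')$. Moreover, the hanging point is the lexicographically maximal vertex of the walk, a translation-invariant function of the vertex set, and a direct check shows that the cyclic shift decrements $\hang$ by one modulo $n+1$. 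Iterating the shift then yields that $\PSAW_n(\Ga \closes, \hang = \ell)$ does not depend on $\ell \in \{0, \dots, n\}$, and therefore equals $\PSAW_n(\Ga \closes)/(n+1)$.

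With this identity in hand, the inequality $\PSAW_n(\Ga \closes \mid \hang = \ell) \le n^{-1/4 + 4\eps}$ rearranges to $\PSAW_n(\hang = \ell) \ge \PSAW_n(\Ga \closes)\, n^{1/4 - 4\eps}/(n+1)$, which under \eqref{eq.assumption} is at least $n^\eps/(n+1) \ge n^{\eps - 1}/2$. Since the values $\PSAW_n(\hang = \ell)$ for $\ell \in \{0, \dots, n\}$ form a probability distribution on $\{0, \dots, n\}$, at most $2 n^{1 - \eps}$ of them can reach this lower bound, yielding the claim. No real obstacle is anticipated: the argument reduces to polygonal invariance together with the elementary observation that a probability distribution places mass at least $\alpha$ on at most $\alpha^{-1}$ points.
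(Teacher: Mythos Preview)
Your argument is correct and follows essentially the same route as the paper: both use polygonal invariance to obtain $\PSAW_n(\Ga\closes,\lexm=\ell)=\PSAW_n(\Ga\closes)/(n+1)$, combine this with assumption~\eqref{eq.assumption}, and finish by noting that a probability distribution cannot place mass at least $n^{\eps-1}/2$ on more than $2n^{1-\eps}$ points (the paper expresses this last step via the equivalent sum $\sum_\ell (n+1)\PSAW_n(\lexm=\ell)=n+1$, but the content is identical). Your explicit description of the cyclic-shift bijection is a helpful elaboration of what the paper invokes as Lemma~\ref{lem.polyinv}.
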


\begin{proof}
  By the polygonal invariance (Lemma~\ref{lem.polyinv}), 
  $\lexm(\Ga)$ conditionally on $\Ga$ closing is uniform in $\{0, \dots, n\}$, so that, 
  for each $\ell \in \{0,\ldots,n\}$,
  \begin{equation}\label{eq:solid}
    \PSAW_n \big( \Ga \closes , \lexm = \ell \big) = \frac{1}{n+1} \PSAW_n \big( \Ga \closes \big).
  \end{equation}
  Hence, by \eqref{eq.assumption},
  \begin{align*}
    \PSAW_n \big( \Ga \closes \, \big\vert \,  \lexm = \ell \big) 
   &= \frac{ \PSAW_n (\Ga \closes , \lexm = \ell) } {\PSAW_n(\lexm = \ell)} \\
    & \geq \frac{n^{ - \frac{1}{4} + 5 \eps}}{n+1} \frac{1}{\PSAW_n (\lexm = \ell)},
  \end{align*}
  which in turn gives
  \begin{align*}
    n+1=
    \sum_{\ell = 0}^n{(n+1)\PSAW_n(\lexm = \ell)} 
    \ge \sum_{\ell=0}^n\frac{n^{ - \frac{1}{4} + 5 \eps}}{\PSAW_n \big( \Ga \closes  \, \big\vert \, \lexm = \ell \big) }.
  \end{align*}
  Since $n \geq 2$, the lemma follows.
\end{proof}

\begin{definition}
A walk $\ga\in \SAW_\ell$ with $\lexm(\ga)=\ell$ is said to be \emph{good} if 
\begin{align*}
  \PSAW_{n + \ell - \lo} \big( \Ga \closes \, \big\vert \, \Ga^1 = \ga \big) \geq n^{ - \frac{1}{4} + 3 \eps}.
\end{align*}
Any translate of a good walk is also called good.
\end{definition}
Thus, $\ga$ is good if, 
when completed with $n-\lo$ steps in such a way that the resulting walk has \lex\ time $\ell$, 
the resulting walk has a reasonable chance of closing.
Note that the above definition is specific to the values of $n$ and $\lo$ fixed before. 

In the next lemma, we bound  from below (still under the assumption that inequality~\eqref{eq.assumption} holds) the probability of being good
for the first part of a walk with $\hang$ close to $\lo$. 
We start with the case $\hang = \lo$, 
then we resample patterns using Lemma~\ref{lem:balanced} to change $\hang$ by an additive constant smaller than $\sqrt n$. 

\begin{lemma}\label{l.good}
  For $n$ large enough and any $0\le k\le \sqrt n$,
  \begin{align}
    & \PSAW_n \big( \Ga^1 \text{ is good} \, \big\vert \, \Ga \closes , \lexm = \lo-2k \big) \geq n^{ - \frac{1}{4} + 2 \eps}\label{eq.good2}.
  \end{align}
\end{lemma}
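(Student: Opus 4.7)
The plan is to establish the case $k=0$ by a reverse-Markov argument, then propagate it to $1\le k\le \sqrt n$ via the pattern-exchange furnished by Lemma~\ref{lem:balanced}.

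For the base case, set $p(\gamma):=\PSAW_n(\Gamma\closes\mid\Gamma^1=\gamma)$ for walks $\gamma$ of length $\lo$ with $\lexm(\gamma)=\lo$; the bound \eqref{eq:solidl} rewrites as $\E_{\PSAW_n(\cdot\mid\lexm=\lo)}[p(\Gamma^1)]\ge n^{-1/4+4\eps}$. A first part is good precisely when $p(\Gamma^1)\ge n^{-1/4+3\eps}$, so the contribution of the non-good $\Gamma^1$ to this expectation is at most $n^{-1/4+3\eps}$. Subtracting yields
\[
\PSAW_n\bigl(\Gamma^1\text{ good},\,\Gamma\closes\bigm|\lexm=\lo\bigr)\ge n^{-1/4+4\eps}-n^{-1/4+3\eps}\ge \tfrac12 n^{-1/4+4\eps}
\]
for $n$ large, and dividing by the trivial upper bound $\PSAW_n(\Gamma\closes\mid\lexm=\lo)\le 1$ gives \eqref{eq.good2} at $k=0$, with margin to spare.

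For $k\ge 1$, I will exchange patterns inside a closing walk $\Gamma$ of length $n$ with $\lexm=\lo$ and good $\Gamma^1$: swap $k$ type-II patterns inside $\Gamma^1$ for type-I (such patterns are abundant by \eqref{eq.patterns}) and compensate by swapping $k$ type-I patterns inside $\Gamma^2$ for type-II. This preserves the shell, the total length $n$, the hanging vertex, and the walk's endpoint (types I and II have the same displacement $e_2$), so the closing property is maintained, while $\lexm$ shifts from $\lo$ to $\lo-2k$ (this is the exchange depicted in Figure~\ref{fig:pattern_redistrib}). Applying \eqref{eq.resample} of Lemma~\ref{lem:balanced} shell-by-shell with the partition $(S_1,S_2)=(\Gamma^1\text{-slots},\,\Gamma^2\text{-slots})$, I obtain, on the set of typical shells (those satisfying $T_I,T_{II},|S_1|,|S_2|\gtrsim n$), that the $\PSAW_n$-weights at $\lexm=\lo-2k$ and $\lexm=\lo$ within a fixed shell differ by at most a factor $n^{\eps'}$ for any fixed $\eps'>0$; atypical shells contribute only exponentially small mass via \eqref{eq.patterns} and \eqref{eq.hw}. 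Since closing is a shell invariant, and polygonal invariance (Lemma~\ref{lem.polyinv}) renders the denominators $\PSAW_n(\Gamma\closes,\lexm=\ell)$ independent of $\ell$, the base-case bound transfers to $\lexm=\lo-2k$ at the cost of the $n^{-\eps'}$ factor, which is absorbed into the exponent slack by choosing $\eps'<\eps$.

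The delicate step, and the one I expect to be the main obstacle, is verifying that the pattern-exchange approximately preserves the \emph{good} property of the first part. The definition of good at length $\ell$ is framed with respect to $\PSAW_{n+\ell-\lo}$, so it changes with $\ell$; the saving grace is that the length of the candidate second parts entering this definition is $n-\lo$ \emph{independently} of $\ell$, and the hanging vertex is a shell invariant. Consequently, the only difference between good at length $\lo$ and good at length $\lo-2k$ lies in the avoidance constraint on the second part, and the vertex sets of $\Gamma^1$ and of its image $\Gamma^{'1}$ under the exchange differ in at most $O(k)=O(\sqrt n)$ slot-interior vertices -- a negligible perturbation that only scales the good threshold by a factor $1+o(1)$. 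Once this bookkeeping is done, the base-case bound propagates to all $k\le \sqrt n$ with exponent $-1/4+2\eps$, as required.
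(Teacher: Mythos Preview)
Your overall architecture matches the paper's proof: reverse-Markov for $k=0$, then shell-by-shell pattern exchange via Lemma~\ref{lem:balanced} using polygonal invariance to equate the denominators. The base case is fine, and your identification of the crux---whether the pattern exchange preserves the ``good'' property of $\Gamma^1$---is exactly right.

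Where the argument breaks down is your resolution of that crux. You assert that the $O(\sqrt n)$ slot-interior vertices by which $\Gamma^1$ and $\Gamma'^1$ differ constitute ``a negligible perturbation that only scales the good threshold by a factor $1+o(1)$''. This is not justified, and as a generic perturbation claim it is false: adding or removing a single obstacle vertex near the origin can change the closing probability of the continuation by an arbitrary factor. What actually saves you---and what the paper uses without fanfare---is that the patterns $\chi^I$ and $\chi^{II}$ were chosen to visit \emph{every} boundary vertex of their $[0,3]^d$ cube. Hence any continuation $\Gamma^2$ that avoids $\gamma^1$ already avoids the full boundary of every slot in $\gamma^1$, and so cannot enter the slot interior at all. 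The avoidance constraint on $\Gamma^2$ is therefore \emph{identical} for all $\gamma^1$ in a given shell, and since the hanging vertex and the continuation length $n-\lo$ are also shell-determined, $\PSAW_{n+\ell-\lo}(\Gamma\closes\mid\Gamma^1=\gamma^1)$---and hence goodness---is exactly a function of $\varsigma(\gamma)$. No perturbation estimate is needed; the exchange preserves goodness on the nose.

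A secondary gap: to invoke \eqref{eq.resample} you need both $T_I^1$-values to lie within $2\sqrt n(\log n)^{1/2+\eps}$ of the centre $T_I|S_1|/(|S_1|+|S_2|)$. Your sketch handles shells with too few slots or patterns via \eqref{eq.patterns}, but you must also discard, within the good shells, the \emph{unbalanced} configurations where $T_I^1$ at $\lexm=\lo$ is far from centre; this is where \eqref{eq.balanced} enters, and the paper does this explicitly by passing to the subset $\mathcal G_{\rm bal}$.
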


\begin{proof}
  We start with the case $k=0$. First note that
  $$\ESAW_n \Big[ \PSAW_n \big( \Ga \closes  \big|\Ga^1 \big) \Big|  |\Ga^1| = \lo  \Big] =  \PSAW_n \big( \Ga \closes \, \big| \,  \lexm = \lo \big) 
  \geq n^{ - \frac{1}{4} + 4 \eps}.$$  
  Since we may assume that $n \geq 2^{1/\eps}$,
  \begin{eqnarray}
    & & \PSAW_n \big( \Ga^1 \text{ is good} \, \big| \, \lexm = \lo \big) \label{eq.first} \\
    & = & \PSAW_n \left( \PSAW_n \big( \Ga \closes \, \big| \, \Ga^1 \big) \geq n^{ - \frac{1}{4} + 3 \eps} \, \Big| \, |\Ga^1| = \lo  \right)
    \geq  n^{ - \frac{1}{4} + 3 \eps}. \nonumber
  \end{eqnarray}
  But
  \begin{eqnarray}
    &  & \frac
    {\PSAW_n \big( \Ga^1 \text{ is good} \, \big\vert \, \lexm = \lo ; \Ga \closes \big) }
    {\PSAW_n \big( \Ga^1 \text{ is good} \, \big\vert \, \lexm = \lo \big) } \label{eq.second} \\
    & = &
    \frac
    {\PSAW_n (\Ga \closes \cond \Ga^1 \text{ is good} ; \lexm = \lo) }
    {\PSAW_n (\Ga \closes \cond \lexm = \lo) }
    \geq 1.    \nonumber
  \end{eqnarray}
  The inequality is a direct consequence of the definition of a good walk. 
  From (\ref{eq.first}) and (\ref{eq.second}), we deduce that  
  \begin{align}
    \PSAW_n \left(\Ga^1 \text{ is good} \,\big| \,  \Ga \closes ; \lexm = \lo \right) \geq n^{ - \frac{1}{4} + 3 \eps}, \label{eq.good_k0}
  \end{align}
  which is an improved version of \eqref{eq.good2} for $k = 0$.

  \medbreak
  Now we extend the result to general values of $k$. 
  For this we will use Lemma~\ref{lem:balanced}.
  
  First observe that, for a shell $\si$ and a walk $\ga\in\si$, 
  the \lex\ point of $\ga$ is entirely determined by $\si$ 
  (beware of the fact that this is only true for the point, not the index).
  
  For a walk $\ga$, let $S_1$ denote the slots of $\varsigma(\ga)$ between the origin and $\ga_\lexm$
  and $S_2$ those after $\ga_\lexm$. 
  (The type I and II patterns are such that $\ga_\lexm$ cannot be a vertex belonging to a pattern of either type.) 
  We say that $\ga$ is {\em balanced} if 
  $\left| T^1_I(\ga)-\frac{T_I |S_1|}{|S_1|+|S_2|}\right| \le \sqrt n (\log n)^{1/2 + \eps}$.
  
  Fix $\de,c > 0$ for which \eqref{eq.patterns2} holds. 
  Let $\mathcal G$ be the set of shells satisfying the assumptions of Lemma~\ref{lem:balanced} 
  and such that, if $\ga\in \SAW_n$ satisfies $\varsigma(\ga) \in \mathcal G$, 
  then $\ga^1$ is good and $\ga$ closes. 
  Call $\mathcal G _{\text{bal}}$ the set of shells $\si \in \mathcal G$ such that
  any $\ga \in \si$ with $\lexm(\ga) = \lo$ is balanced.
  
  Note that $S_1$ and $S_2$ depend on $\ga$ only via $\varsigma(\ga)$. 
  Also, whether $\ga^1$ is good and whether $\ga$ is closing may each be determined from $\varsigma(\ga)$ alone. 
  Thus, $\varsigma(\ga) \in \mathcal G$ as soon as~$\varsigma(\ga)$ satisfies the assumptions of Lemma~\ref{lem:balanced},
  $\ga^1$ is good and~$\ga$ closes. 
  Moreover, any two walks from~$\SAW_n$ with the same shell and hanging time are either both balanced or both not balanced. 
  Hence, for~$\ga\in\SAW_n$ with~$\lexm=\ell_0$ and $\shell(\ga) \in \mathcal G$, 
  the shell~$\varsigma(\ga)$ is in~$\mathcal G _{\text{bal}}$ as soon as~$\ga$ is balanced.
  
  It will be useful to note that, by \eqref{eq.assumption}, \eqref{eq.good_k0} and polygonal invariance,
  \begin{align*}
    \PSAW_n \left(\Ga^1 \text{ is good}, \Ga \closes \text{ and } \lexm = \lo \right) \geq n^{ - \frac32 + 7 \eps}.
  \end{align*}
  Thus, by the choice of $\de$,
  \begin{align*}
	\PSAW_n \left( \varsigma(\Ga) \notin \mathcal G  \,\big|\, \Ga^1 \text{ is good},\Ga \text{ closes} \text{ and }  \lexm = \lo \right) 
     \le 4e^{-cn}n^{\frac32}. 
  \end{align*}
  Using \eqref{eq.good_k0} again, this implies that for $n$ large enough, 
   \begin{align*}
	  \PSAW_n \left( \varsigma(\Ga) \in \mathcal G  \,\big|\, \Ga \text{ closes} \text{ and }  \lexm = \lo \right) 
     \geq \tfrac12 n^{ - \frac{1}{4} + 3 \eps}.
  \end{align*}
   By the first part of Lemma \ref{lem:balanced}, for $n$ large enough, 
  \begin{align*}
    \PSAW_n \left( \Ga \text{ not balanced} \,\big|\, \varsigma(\Ga) = \sigma \right) \leq n^{-\frac74}, 
    \quad \forall \sigma\in \mathcal G.
  \end{align*}
  Thus 
  \begin{align*}
     & \PSAW_n \left(\varsigma(\Ga) \in \mathcal G \setminus \mathcal G _{\text{bal}} \,\big|\, \Ga \closes \text{ and } \lexm = \lo \right)
     \\& \qquad \leq \frac{\PSAW_n \left( \Ga \text{ not balanced}  \,\big|\,  \varsigma(\Ga) \in \mathcal G  \right)}
     	{\PSAW_n \left(\varsigma(\Ga) \in \mathcal G \text{ and } \lexm = \lo \right)}
     \leq 2 n^{-\frac14 - 7 \eps}.
  \end{align*}
  The above inequalities yield 
  \begin{align*}
     \PSAW_n \left(\varsigma(\Ga) \in \mathcal G _{\text{bal}} \,\big|\, \Ga \closes  ,  \lexm = \lo \right)
     \geq \tfrac14 n^{ - \frac{1}{4} + 3 \eps},
  \end{align*}
  for $n$ large enough. 

  Let $\sigma \in \mathcal G_{\text{bal}}$ and $\la$ be the number of type I patterns 
  needed in the first part of a walk $\ga \in \sigma$ so that $\lexm(\ga) = \lo$. 
  By \eqref{eq.resample}, for $0 \leq k \leq \sqrt n$ and $n$ large enough,  
  \begin{align*}
    \frac{\PSAW_n \big( \lexm= \lo - 2k \big| \varsigma= \sigma \big)}
    {\PSAW_n \big( \lexm = \lo  \big| \varsigma = \sigma \big)}
    = \frac{\PSAW_n \big( T_I^1(\Ga) = \la + k \big| \varsigma = \sigma \big)}
    {\PSAW_n \big( T_I^1(\Ga) = \la \big| \varsigma= \sigma \big)}
    \geq 4 n^{-\eps}.
  \end{align*}
  But
  \begin{eqnarray*}
    &&  \PSAW_n \big( \Ga^1 \text{ is good} \, \big\vert \, \Ga \closes , \lexm = \lo-2k \big) \\
    &&   \geq  \sum_{\si \in \mathcal G _{\text{bal}}} 
    \PSAW_n \big( \varsigma(\Ga) = \sigma \,  \big\vert \, \Ga \closes , \lexm = \lo-2k \big) \\
    &&   =   \sum_{\si \in \mathcal G _{\text{bal}}} 
    \PSAW_n \big( \lexm = \lo-2k \, \big\vert \,  \varsigma(\Ga) = \sigma \big) 
    \frac{\PSAW_n ( \varsigma(\Ga) = \sigma)}{\PSAW_n (\Ga \closes , \lexm = \lo-2k) }\\
    &&   \geq  4 n^{-\eps} \sum_{\si \in \mathcal G _{\text{bal}}} 
    \PSAW_n \big( \lexm = \lo  \, \big\vert \,  \varsigma(\Ga) = \sigma \big) 
    \frac{\PSAW_n(\varsigma(\Ga) = \sigma)}{\PSAW_n (\Ga \closes , \lexm = \lo) }\\
    &&  =  4 n^{-\eps} \PSAW_n \big( \varsigma(\Ga) \in  \mathcal G _{\text{bal}} \, \big\vert \, \Ga \closes , \lexm = \lo \big)   \\
    &&  \geq   n^{ - \frac{1}{4} + 2 \eps}.
  \end{eqnarray*}
  Here, we used polygonal invariance (Lemma~\ref{lem.polyinv}) to assert that 
  $$\PSAW_n (\Ga \closes , \lexm = \lo - 2k) = \PSAW_n (\Ga \closes , \lexm = \lo).$$
\end{proof}

\begin{definition}
For a closing walk $\ga \in \SAW_n$, an index $\ell$ is said to be \emph{ticked} if 
$\ga[\lexm - \ell, \lexm]$ is good.
\end{definition}
In this definition, closing walks are viewed as polygons and we use the modulo $n+1$ notation for their indices. 
Thus $\ga[\lexm - \ell, \lexm]$ may contain the edge $(\ga_n, \ga_0)$.
Note that the hanging point and the ticked indices of a closing walk only depend on the corresponding polygon.

Let $\sfT = \sfT(\ga)$ be the number of ticked indices in $\{\lo - 2k,0\le k\le \sqrt n\}$.
The next lemma shows that the probability of having many ticked points is not too small.
\begin{lemma}\label{l.ticked}
  For $n$ large enough,
  \begin{align*}
    \PSAW_n \Big( \sfT(\Ga) \geq  n^{\frac{1}{4} + \eps} \, \Big\vert \,  \Ga \closes \Big) \geq n^{-  \frac{1}{4} + \eps}.
  \end{align*}
\end{lemma}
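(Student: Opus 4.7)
The plan is to compute a lower bound on $\ESAW_n[\sfT(\Ga) \mid \Ga \closes]$ and then extract the desired tail bound using the fact that $\sfT$ is a bounded random variable. The starting point is the estimate from Lemma~\ref{l.good}, which controls the probability that $\Ga^1$ is good conditional on $\Ga \closes$ and $\lexm = \lo - 2k$; the main work is to translate that into a bound on the tick-probability at the index $\lo - 2k$.

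The key observation is that, for a fixed index $\ell$, whether $\ell$ is ticked depends only on the polygon underlying the closing walk, not on the chosen starting point, whereas the event $\lexm = \ell$ is exactly the statement that we started the walk at the unique cyclic shift placing the hanging point at position $\ell$. By polygonal invariance (Lemma~\ref{lem.polyinv}), conditional on $\Ga \closes$ the starting position is uniform on the $n+1$ cyclic shifts, so for any polygon-measurable event $X$ we have
$$\PSAW_n(X \mid \Ga \closes) = \PSAW_n(X \mid \Ga \closes,\, \lexm = \ell).$$
Applied to $X = \{\ell \text{ is ticked}\}$, and using that on $\{\lexm = \ell\}$ this event coincides with $\{\Ga^1 \text{ is good}\}$, Lemma~\ref{l.good} yields, for every $0 \le k \le \sqrt n$,
$$\PSAW_n\bigl(\lo - 2k \text{ is ticked} \,\big|\, \Ga \closes\bigr) \;\geq\; n^{-1/4 + 2\eps}.$$

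Summing this estimate over the $\lfloor \sqrt n \rfloor + 1$ admissible values of $k$ gives
$$\ESAW_n\bigl[\sfT(\Ga) \,\big|\, \Ga \closes\bigr] \;\geq\; (\lfloor \sqrt n \rfloor + 1)\, n^{-1/4 + 2\eps} \;\geq\; n^{1/4 + 2\eps}.$$
On the other hand $\sfT \leq \lfloor \sqrt n \rfloor + 1 \leq 2\sqrt n$ deterministically, so splitting the expectation according to whether $\sfT < n^{1/4 + \eps}$ or not,
$$n^{1/4 + 2\eps} \;\leq\; n^{1/4 + \eps} + 2\sqrt n \cdot \PSAW_n\bigl(\sfT \geq n^{1/4 + \eps} \,\big|\, \Ga \closes\bigr).$$
Rearranging, and using $n^{1/4 + 2\eps} - n^{1/4 + \eps} \geq \tfrac12 n^{1/4 + 2\eps}$ for $n$ large enough, yields $\PSAW_n(\sfT \geq n^{1/4 + \eps} \mid \Ga \closes) \geq \tfrac14 n^{-1/4 + 2\eps} \geq n^{-1/4 + \eps}$.

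The conceptually delicate step is the first one: it is tempting but wrong to think of ``$\ell$ is ticked'' and ``$\lexm = \ell$'' as independent events, and the identity $\PSAW_n(X \mid \Ga\closes) = \PSAW_n(X \mid \Ga\closes,\lexm = \ell)$ for polygon-measurable $X$ must be established by a clean appeal to polygonal invariance. Once this is in hand, the remainder of the proof is a routine first-moment argument. The two extraneous conditions $n$ large enough (namely, so that $n^\eps \geq 2$ and $\lfloor \sqrt n \rfloor + 1 \leq 2\sqrt n$) are harmless since the statement is asymptotic.
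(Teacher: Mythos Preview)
Your proof is correct and follows essentially the same route as the paper: you use polygonal invariance to identify $\PSAW_n(\lo - 2k \text{ is ticked} \mid \Ga\closes)$ with $\PSAW_n(\Ga^1 \text{ is good} \mid \Ga\closes,\, \lexm = \lo - 2k)$, sum over $k$ to get a lower bound on the conditional expectation of $\sfT$, and then extract the tail bound from the deterministic upper bound $\sfT \leq 1 + \sqrt n$. Your explanation of the polygonal-invariance step (that ticked-ness is polygon-measurable while $\{\lexm = \ell\}$ records only the cyclic shift) is in fact more explicit than the paper's one-line justification.
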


\begin{proof}
 Consider $n$ large enough so that Lemma \ref{l.good} holds. For $0 \le k \le \sqrt n$,
  \begin{eqnarray*}
    &  & \PSAW_n \big( \lo - 2k \text{ is ticked} \, \big\vert \, \Ga \closes \big) \\
    & = & \PSAW_n \big(  \Ga^1 \text{ is good} \, \big\vert \,  \Ga \closes , \lexm = \lo - 2k \big) 
    \geq n^{ - \frac{1}{4} + 2 \eps},
  \end{eqnarray*}
  where the equality is due to polygonal invariance (Lemma~\ref{lem.polyinv}) and the inequality to Lemma \ref{l.good}. 
  It follows that 
  \begin{align*}
    \ESAW_n \left[ \sfT(\Ga) \, \big\vert \, \Ga \closes \right]
    = \sum_{k=0}^{\sqrt n}\PSAW_n \big( \lo - 2k \text{ is ticked} \, \big\vert \, \Ga \closes \big)
    \geq n^{ \frac{1}{4} + 2 \eps}.
  \end{align*}
 Since $\sfT$ is bounded by $1+\sqrt n$ and $n \geq 4^{1/\eps}$, we find that
  \begin{align*}
    \PSAW_n \Big( \sfT(\Ga) \geq n^{-\frac{1}{4} + \eps} \sqrt n \, \Big\vert \, \Ga \closes \Big)
    \geq n^{ - \frac{1}{4} + \eps}. 
  \end{align*}
\end{proof}

The next lemma shows that a portion of walk with many ticked indices, ending at some site~$z$, 
is very unlikely to be the beginning of a self-avoiding walk whose hanging point is~$z$.
\begin{lemma}\label{l.avoidance}
  For a closing walk $\chi \in \SAW_n$ with $\sfT(\chi) \geq n^{\frac{1}{4} + \eps}$,
  \begin{align}
    \PSAW_{n} \left( \lexm(\Ga) =  \lo \, \Big\vert \, \Ga[0, \lo] = \chi[\lexm - \lo, \lexm] \right) 
    &\leq e^{ - n^{\eps}}.
    \label{eq.avoided}
  \end{align}
\end{lemma}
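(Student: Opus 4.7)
The strategy is to exploit the chain of $\sfT \geq n^{1/4+\eps}$ nested good sub-walks $\chi^j := \chi[\lexm - \ell_j, \lexm]$ (starting at $x_j := \chi_{\lexm - \ell_j}$ and ending at $w := \chi_\lexm$) in two stages: first establish a stretched-exponential bound in the clean setting of lex half-space walks from $w$, then transfer this bound to the quantity in the lemma via a Hammersley--Welsh comparison.

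For the first stage, let $\mathcal H$ denote the set of SAWs $\eta$ of length $n-\lo$ with $\eta_0 = w$ and $\eta_j <_{\text{lex}} w$ for all $j \geq 1$, let $H_j \subset \mathcal H$ be those that avoid $\chi^j \setminus \{w\}$ (nested: $\mathcal H = H_0 \supset H_1 \supset \cdots \supset H_\sfT$), and let $A_j \subset H_j$ consist of those whose endpoint is a lattice neighbour of $x_j$. The goodness of $\chi^j$ translates exactly to $|A_j| \geq n^{-1/4+3\eps}|H_j|$. Since the endpoint of $\eta \in \mathcal H$ has only $2d$ lattice neighbours and the $x_j$'s are distinct, each $\eta$ lies in at most $2d$ of the sets $A_j$; hence $\sum_{j > j_0}|A_j| \leq 2d |H_{j_0}|$ for every $j_0$. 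Combined with the goodness bound and the monotonicity of $|H_j|$, choosing $K = \lceil 2de \cdot n^{1/4 - 3\eps}\rceil$ gives $|H_{j_0+K}| \leq |H_{j_0}|/e$. Iterating over blocks of size $K$ inside the chain of length $\sfT$ ($\gtrsim n^{4\eps}$ such blocks) yields
\[
    \frac{|H_\sfT|}{|\mathcal H|} \leq \exp\bigl(-c \, n^{4\eps}\bigr)
\]
for an absolute constant $c > 0$.

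For the second stage, let $\xi := \chi[\lexm - \lo, \lexm]$, $M_\xi$ the set of SAW extensions of $\xi$ of length $n - \lo$ starting at $w$, and $H_\xi \subset M_\xi$ those for which $\xi \cdot \eta$ has hanging time $\lo$; the target probability equals $|H_\xi|/|M_\xi|$. The containment $H_\xi \subset H_\sfT$ is immediate since $\chi^\sfT$ is a suffix of $\xi$. For the denominator, since $w$ is the lex-maximum of $\chi$ every point of $\xi$ has $e_1$-coordinate at most $\langle w | e_1\rangle$, so every SAW $\eta$ with $\eta_0 = w$ and $\langle \eta_j | e_1\rangle > \langle w | e_1\rangle$ for $j \geq 1$ automatically lies in $M_\xi$; after translation these form a copy of $\sahswset_{n-\lo}$. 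The Hammersley--Welsh bound \eqref{eq.hw}, together with the inequality $|\sahswset_m| \geq |\sabset_m| \geq e^{-c_{HW}\sqrt m}\mu_c^m$ extracted from the iterative unfolding behind \eqref{eq.hw}, gives $|\mathcal H| \leq e^{c_{HW}\sqrt n}|M_\xi|$, whence
\[
    \frac{|H_\xi|}{|M_\xi|} \leq \exp\bigl(c_{HW}\sqrt n - c \, n^{4\eps}\bigr) \leq e^{-n^\eps}
\]
for $n$ sufficiently large.

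The main obstacle is precisely the competition between the stretched-exponential gain of Stage 1 and the Hammersley--Welsh penalty of Stage 2. The crude comparison above yields $e^{-n^\eps}$ directly only when $4\eps$ comfortably exceeds $1/2$, and for smaller $\eps$ the $e^{c_{HW}\sqrt n}$ factor threatens to swamp the gain. I expect the honest proof to require a more delicate transfer step: either a sharper unfolding that pays the Hammersley--Welsh price only over the short piece $\xi \setminus \chi^\sfT$ of length $\lo - \ell_\sfT$, or a multi-valued injection $H_\xi \hookrightarrow M_\xi$ built from the goodness of several $\chi^j$'s simultaneously to produce stretched-exponentially many extensions of $\xi$ from each lex half-space extension. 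The chain iteration in Stage 1 is essentially routine once the setup is right; the delicate technical core of the proof lies in this second-stage comparison.
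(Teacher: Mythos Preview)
Your Stage~1 is essentially the paper's argument and is fine. The problem is Stage~2, as you yourself flag. Under the standing hypothesis $\eps \leq 1/20$ we always have $4\eps \leq 1/5 < 1/2$, so the Hammersley--Welsh penalty $e^{c_{HW}\sqrt n}$ genuinely swamps the gain $e^{-c\,n^{4\eps}}$; the argument as written does not prove the lemma for any admissible $\eps$.

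The paper's fix is far simpler than either of your proposed remedies: a single reflection replaces the Hammersley--Welsh comparison and costs only a constant factor $2d$. Given $\eta \in \mathcal H$, reflect it in the horizontal hyperplane through $w$ via $\Rfl_w$ and prepend one edge $e_1$. Since $w$ is the lex-maximum of $\eta$, every point of $\eta$ has $e_1$-coordinate at most $\langle w \mid e_1\rangle$; after reflection and the added vertical step, every point of the image has $e_1$-coordinate strictly greater than $\langle w \mid e_1\rangle$, hence the image avoids $\xi$ (which lies at or below that height) and so belongs to $M_\xi$ after deleting its final edge to restore length $n-\lo$. That deletion is at most $(2d)$-to-one, giving $|M_\xi| \geq |\mathcal H|/(2d)$ directly, and therefore
\[
\frac{|H_\xi|}{|M_\xi|} \;\leq\; 2d\,\frac{|H_\sfT|}{|\mathcal H|} \;\leq\; 2d\, e^{-c\,n^{4\eps}} \;\leq\; e^{-n^\eps}
\]
for large $n$. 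You were one step away: you already noted that extensions lying strictly above $w$ in the $e_1$-coordinate automatically belong to $M_\xi$, but then counted them via $|\sahswset_{n-\lo}|$ instead of observing that your own set $\mathcal H$ maps into them by reflection.
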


\begin{proof}
  Let $\chi \in \SAW_n$ be a closing walk with $\sfT(\chi) \geq n^{\frac{1}{4} + \eps}$ and  
  assume without loss of generality that $\lexm(\chi) = n$.

  For the purpose of this proof only,
  let $ \Wtmp$ be the set of walks $\ga$ of length~$n-\lo$,
  originating at $\chi_n$, with $\lexm(\ga) = 0$.
  Let $\Ptmp$ denote the uniform measure on the set $\Wtmp$. When working with  $\Ptmp$, 
  $\Gamma$ denotes a random variable distributed according to $\Ptmp$. 
  In particular, $\Gamma$ is contained in the half-space below and including $\chi_n$.
  
  We now extend the notion of closing walk by saying that  $\ga'$ {\em closes} $\ga$ if $\ga_{|\ga|} = \ga'_0$ 
  and $\ga'_{|\ga'|}$ is adjacent to $\ga_0$. 
  We say that $\ga'$ {\em avoids} $\ga$ if $\ga'\cap \ga = \{\ga'_0\}$.
  
  Let $ t_1 < \dots < t_{\sfT}$  be the ticked indices of $\chi$ contained in $\{\lo - 2k, 0\le k \le \sqrt n\}$.
  Consider the walks $\chi^{j} = \chi[n - j , n]$ for $0 \leq j \leq n$.
  They all end at $\chi_n$ and $ \chi^{j} \subsetneq \chi^{j+1}$. 
  For $ 1 \leq i \leq \sfT$, define
  \begin{align*}
    A_i&=\{ \Gamma \text{ avoids } \chi^{t_i} \} \quad \text{ and } \quad C_i=\{\Gamma\text{ closes }\chi^{t_i}\}.
  \end{align*}
  Also, let $\Atmp = \{ \ga \in \Wtmp : \ga \text{ avoids } \chi^{\ell_0}\}$.
   
  Since $\chi^{t_i}$ is good, 
  \begin{align}\label{eq.closing_avoiding}
    \Ptmp \left(\Gamma \closes \, \chi^{t_i} \mcond \Gamma \text{ avoids } \chi^{t_i}\right)
    = \Ptmp \left(C_i \mcond  A_i \right) 
    \geq n^{ - \frac{1}{4} + 3 \eps}.
  \end{align}
  Write $k = \lceil  4d n^{\frac{1}{4} - 3 \eps} \rceil$ and suppose that $k \leq \sfT$. 
  Any realization $\Ga \in \Wtmp$ is in at most $2d$ events $C_i$.
  Hence, by \eqref{eq.closing_avoiding} and the fact that the~$A_j$ are decreasing, 
  \begin{align*}
    2d 
    \geq \sum_{i=1}^{k} \Ptmp (C_i)
    \geq \sum_{i=1}^{k} \Ptmp (C_i \cond A_i)\Ptmp(A_k)
    \geq 4d \Ptmp(A_k).
  \end{align*}
  Therefore, $\Ptmp (A_k) \leq \frac{1}{2}$.
  If the procedure is repeated between $k + 1$ and $2k$, one obtains
  \begin{align*}
    2d
    \geq \sum_{i= k+1}^{2k} \Ptmp (C_i \cond A_k)
    \geq \sum_{i=k+1}^{2k} \Ptmp (C_i \cond A_i) \Ptmp(A_{2k} \cond A_k)
    \geq 4d \Ptmp(A_{2k} \cond A_k),
  \end{align*}
  and thus $P(A_{2k} \cond A_k ) \leq 1/2$.
  Since $A_{2k}\subset A_k$, we find 
  $$\Ptmp (A_{2k}) = \Ptmp (A_k)\Ptmp (A_{2k} \cond A_k) \leq \tfrac14.$$
  This procedure may be repeated $ \lfloor \frac{\sfT}{k}\rfloor $ times.
  Since $\sfT \geq n^{\frac{1}{4} + \eps}$, we obtain 
  \begin{align*}
    \frac{|\Atmp|}{|\Wtmp|}=\Ptmp(\Gamma\text{ avoids }\chi^{\ell_0})\le \Ptmp (A_\sfT) 
    \leq 2^{- \lfloor \frac{\sfT}{k}\rfloor} 
    \leq 2^{ - \tfrac{n^{4 \eps}}{2(4d+1)}},
  \end{align*}
  for $n$ large enough and $\eps \leq 1/12$, which can be harmlessly assumed. 
  
  \begin{figure}
    \begin{center}
      \cpsfrag{l}{$n - \lo$}
      \cpsfrag{ti}{$n - t_1$}
      \cpsfrag{tii}{$n - t_2$}
      \cpsfrag{tt}{$n - t_\sfT$}
      \cpsfrag{cn}{$\chi_n$}
      \cpsfrag{e}{$e_1$}
      \includegraphics[width=0.75\textwidth]{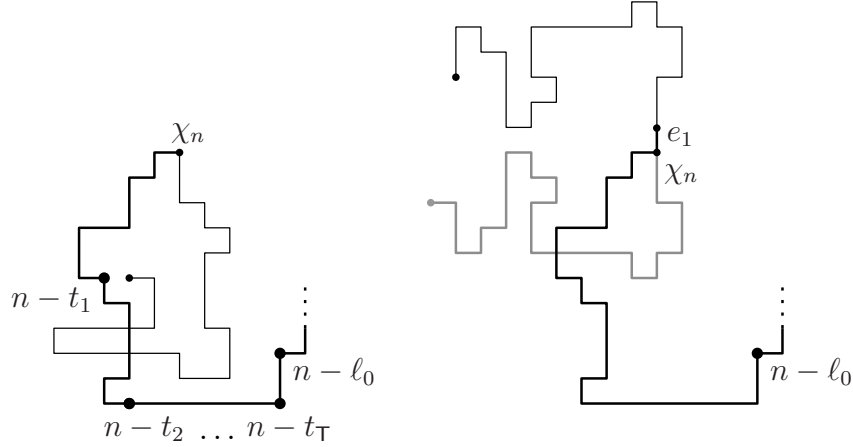}
    \end{center}
    \caption{\emph{Left:} The final portion of $\chi$ in bold, 
      and a walk contained in $C_1$ and in $A_1$ but not in $A_2$. 
      \emph{Right:} A walk $\ga \in \Wtmp \setminus \Atmp$ (gray) may be reflected 
      and added to $\chi^{\lo}$ (black)
       to create a walk starting with $\chi^{\lo}$, with 
      \lex\ point different from $\chi_n$.
      When concatenating $\Rfl_{\chi_n} (\ga)$ to $\chi^{\lo}$, an extra vertical edge is added in 
      to ensure non-intersection, and the last edge is deleted to preserve length $n$. }
    \label{fig:avoided}
  \end{figure}

  Let us now express the probability in \eqref{eq.avoided} in terms of the ratio $|\Atmp|/|\Wtmp|$. 
  The set $\Atmp$ contains all the possible continuations $\ga$ of $\chi^{\lo}$ 
  for which $\chi^{\lo} \circ \ga$ is a self-avoiding walk of length $n$ with $\lexm = \lo$.
  On the other hand, for $\ga \in \Wtmp$, 
  the walk obtained by concatenating to $\chi^{\lo}$ 
  an edge $e_1$ followed by 
  $\Rfl_{\chi_n} (\ga)$ is a self-avoiding walk of length $n + 1$ with $\lexm > \lo$. 
  By deleting the last edge of such walks, we obtain at least $|\Wtmp| / 2d$
  walks of length $n$, starting with $\chi^{\lo}$ and having $\lexm \neq \lo$.
  See Figure \ref{fig:avoided}.
  Thus, 
  \begin{align*}
    \PSAW_n \big( \lexm(\Ga) =  \lo \, \big\vert \, \Ga[0, \lo] = \chi^{\lo} \big)
    &\leq \frac{ 2d |\Atmp|}{|\Wtmp|}\leq 2d  2^{ - \tfrac{n^{4 \eps}}{2(4d+1)}}
    \leq e^{ - n^{\eps}}
  \end{align*}
  for $n$ large enough. This proves \eqref{eq.avoided}.
\end{proof}
  
We are now ready to conclude the proof of Theorem~\ref{t.nonclosing}. 
A walk~$\chi \in \SAW_{\lo}$ with~$\lexm (\chi)=\lo$ is called \emph{untouchable} if
$$
\PSAW_{n} \left( \lexm(\Ga) =  \lo \, \Big\vert \, \Ga[0, \lo] = \chi \right) \leq e^{ - n^{\eps}}.
$$
By Lemmas \ref{l.ticked}, \ref{l.avoidance} and polygonal invariance,
\begin{align*}
  \PSAW_{n} \big( \Ga^1 \text{ is untouchable} \, \big\vert \, \Ga \closes , \lexm = \lo \big) 
  \geq n^{- \frac{1}{4} + \eps}.
\end{align*}
Hence 
\begin{eqnarray*}
  \frac{n^{-\frac14+5\ep}}{n+1} 
  &\leq &
  \PSAW_{n} \big( \Ga \closes , \lexm = \lo \big) \\
  & \leq &
  \frac
  {\PSAW_{n} \big(\Ga \closes , \lexm = \lo \big)}
  {\PSAW_{n} \big( \Ga[0, \lo] \text{ is untouchable}\big)}  \\
 & & \qquad \times \, 
  \frac
  {\PSAW_{n} \big(\Ga[0, \lo] \text{ is untouchable},  \lexm = \lo  \big) }
  {\PSAW_{n} \big( \Ga[0, \lo] \text{ is untouchable},\, \Ga \closes , \lexm = \lo  \big)}\\
  & = &  
  \frac
  {\PSAW_{n} \big( \lexm = \lo \cond \Ga[0, \lo] \text{ is untouchable} \big)}
  { \PSAW_{n} \big( \Ga[0, \lo] \text{ is untouchable} \cond \Ga \closes , \lexm = \lo  \big)} \\
  &\leq &   
  e^{- n^{\eps}} n^{\frac{1}{4} - \eps}.
\end{eqnarray*}
This is a contradiction for $n$ large enough, and the proof of Theorem \ref{t.nonclosing} is complete. \qed

\bibliographystyle{plain}

\bibliography{saw}

\end{document}